\numberwithin{equation}{section}
\theoremstyle{definition}
\newtheorem{theorem}{Theorem}[section]
\newtheorem{lemma}[theorem]{Lemma}
\theoremstyle{remark}
\newcommand{\R}{{\mathbb R}}
\begin{document}

\title[Extremals and critical points of the anisotropic Sobolev inequality in convex cones]{An overview on extremals and critical points of the Sobolev inequality in convex cones}
\author{Alberto Roncoroni}
\address{A. Roncoroni.  Dipartimento di Matematica, Politecnico di Milano, Piazza Leonardo da Vinci 32, 20133, Milano, Italy}
\email{alberto.roncoroni@polimi.it}

\maketitle

\begin{abstract}
In this survey, we consider the sharp Sobolev inequality in convex cones. We also prove it by using the optimal transport technique.  Then we present some results related to the Euler-Lagrange equation of the Sobolev inequaliy: the so-called critical $p-$Laplace equation. Finally, we discuss some stability results related to the Sobolev inequality.
\end{abstract}

\bigskip

\noindent {\footnotesize {\bf AMS subject classifications.} 35A23, 35B33 (primary); 35J92, 35B06 (secondary).}

\noindent {\footnotesize {\bf Key words.} Sharp Sobolev inequality, convex cones, anisotropic elliptic equations, optimal transport, qualitative properties of PDE's.}





\section{Introduction}

The starting point is the well-known Sobolev inequality in $\mathbb{R}^n$ firstly showed in \cite{Sobolev} (see also \cite{Gagliardo,Nirenberg} for alternative proofs and \cite[Theorem IX.9]{Brezisbook}, \cite[Theorem 7.10]{GT} and \cite[Theorem 2.4.1]{Z} for further references): given $n\geq 3$ and $1<p<n$, there exists a positive constant $S=S(n,p)$ such that 
\begin{equation}\label{Sobolev1}
\Vert\nabla u \Vert_{L^p(\mathbb{R}^n)}\geq S\Vert u \Vert_{L^{p^\ast}(\mathbb{R}^n)}\, ,  \quad \text{ for all $u\in W^{1,p}(\mathbb{R}^n)$}\, ,
\end{equation}
where 
$$
p^{\ast}:=\frac{np}{n-p}\, , 
$$ 
is the so-called \emph{Sobolev critical exponent} and the space $W^{1,p}(\mathbb{R}^n)$ is the usual Sobolev space of functions $u\in L^{p}(\mathbb{R}^n)$ such that $\nabla u\in L^p(\mathbb{R}^n)$. Actually, thanks to a short interpolation argument (see e.g. \cite[Corollary IX.10]{Brezisbook}) it is immediate to see that the following version of the Sobolev inequality 
\begin{equation}\label{Sobolev}
\Vert\nabla u \Vert_{L^p(\mathbb{R}^n)}\geq S\Vert u \Vert_{L^{p^\ast}(\mathbb{R}^n)}\, ,  \quad \text{ for all $u\in \dot W^{1,p}(\mathbb{R}^n)$}\, ,
\end{equation}
holds, where 
$$
\dot{W}^{1,p}(\mathbb{R}^n):=\lbrace u\in L^{p^\ast}(\mathbb{R}^n) \, : \, \nabla u\in L^p(\mathbb{R}^n)\rbrace\,  ,
$$ 
is an homogeneous Sobolev space. It is homogeneous in the same sense the Sobolev inequality \eqref{Sobolev} is homogeneous under rescaling $f_{\lambda}(\cdot):=f(\cdot/\lambda)$. As we will see the homogeneous Sobolev space $\dot{W}^{1,p}(\mathbb{R}^n)$ is better than the Sobolev space $W^{1,p}(\mathbb{R}^n)$ in order to study extremals of the Sobolev inequality.

An interesting and fascinating aspect is to prove the sharp version of the Sobolev inequality \eqref{Sobolev}, this means that one wants to characterize the extremals of \eqref{Sobolev}, i.e. functions that realize the equality in \eqref{Sobolev}, and to compute the best constant in \eqref{Sobolev}. 
This has been done independently in two contemporary papers: \cite{Aubin} and \cite{Talenti} by using symmetrizations. In particular, they show that the extremals of \eqref{Sobolev} can be explicitly computed and are of the following form: 
\begin{equation}\label{bubbles}
\mathcal{U}_{a,\lambda,x_0}(x):=\dfrac{a}{\left(1+ \lambda^{\frac{p}{p-1}}\vert x-x_0\vert^{\frac{p}{p-1}} \right)^{\frac{n-p}{p}}}\, , 
\end{equation}
where $a\in\mathbb{R}\, , \lambda>0$ and $ x_0\in\mathbb{R}^n$ can be chosen arbitrary; moreover, the best constant in \eqref{Sobolev} is given by
\begin{equation}\label{opt_constant}
\sqrt{\pi}n^{\frac{1}{p}}\left(\frac{n-p}{p-1}\right)^{\frac{p-1}{p}}\left(\frac{\Gamma(n/p)\Gamma(1+n-n/p)}{\Gamma(1+n/2)\Gamma(n)}\right)^{\frac{1}{n}}\, .
\end{equation}
In this paper we focus on the different approach, proposed in \cite{CNV}, to prove the sharp Sobolev inequality (see Section \ref{OT} for more details). The approach in \cite{CNV} is based on the optimal transport technique and, actually, this approach is suitable to be adapted to show also the sharp Sobolev inequality in $\mathbb{R}^n$ in the anisotropic setting, i.e. in $\mathbb{R}^n$ endowed with a generic anisotropic norm. 
In this context the (anisotropic) Sobolev inequality becomes the following: given $n\geq 3$ and $1<p<n$ there exists a positive constant $S=S(n,p,H)$ such that
 \begin{equation}\label{Sobolev_anis}
\Vert H(\nabla u) \Vert_{L^p(\mathbb{R}^n)}\geq S\Vert u \Vert_{L^{p^\ast}(\mathbb{R}^n)}\, ,  \quad \text{ for all $u\in \dot{W}^{1,p}(\mathbb{R}^n)$}\, ,
\end{equation}
where $H:\mathbb{R}^n\rightarrow\mathbb{R}$ is an anisotropic norm (or gauge), i.e. 
\begin{center}
$H$ is positive, positively homogeneous of degree one\footnote{$
H(\lambda\xi)=\lambda H(\xi)$, for all $\lambda>0$ and $\xi\in\mathbb{R}^n$. Note that, in general, we do not require $H$ to be symmetric, so it may happen that $H(\xi)\neq H(-\xi)$.} and convex\footnote{We mention two typical classes of anisotropic norms: the first ones are the so-called $\ell_p$-norms given by
$$
H(x)=\Vert x \Vert_p:=\left( \vert x_1\vert^p + \dots + \vert x_n\vert^p\right)^{1/p}\,, \quad \text{ for $1< p<\infty$ and $x\in\mathbb{R}^n$}\, ;
$$ 
the second ones are the so-called crystalline norms: given a finite set $\lbrace p_j\rbrace_{j=1}^{N}\subset\mathbb{R}^n\setminus\lbrace 0\rbrace$, with $N\in\mathbb{N}$, consider 
$$
H(x):=\max_{1\leq j\leq N} x\cdot p_j\,, \quad \text{ for $x\in\mathbb{S}^{n-1}$}\, .
$$
};
\end{center}
and 
$$
\dot{W}^{1,p}(\mathbb{R}^n):=\left\lbrace u\in L^{p^\ast}(\mathbb{R}^n) \, : \, \int_{\mathbb{R}^n}H(\nabla u)^p\, dx<+\infty \right\rbrace\,  .
$$
Moreover, thanks to \cite{CNV} we know that the extremals of \eqref{Sobolev_anis} are of the following form:
$$
\mathcal{U}^H_{a,\lambda,x_0}(x):=\frac{a}{\left( 1 + \lambda^\frac{p}{p-1} H_0(x_0-x)^\frac{p}{p-1} \right)^{\frac{n-p}{p}}} \, , 
$$
where $a\in\mathbb{R}$, $ \lambda>0$ and $x_0\in\mathbb{R}^n$ can be chosen arbitrary and where $H_0$ is the dual norm of $H$, explicitly 
$$
H_0(\zeta):=\sup_{H(\xi)=1}\zeta\cdot\xi \, , \quad \text{for all $\zeta\in\mathbb{R}^n$}\, .
$$
Another interesting aspect related to the Sobolev inequality is about its validity in convex cones of $\mathbb{R}^n$. We recall that an open cone $\Sigma\subset\mathbb{R}^n$ is given by
$$
\Sigma:=\lbrace tx \, : \,  t\in(0,+\infty) \, , \, x\in\omega\rbrace \, ,
$$
for some open domain $\omega\subseteq\mathbb{S}^{n-1}$.  In particular, we are interested in convex cones and so we recall that every convex cone $\Sigma$ of $\mathbb{R}^n$ can be decomposed in the following way: 
$$
\Sigma=\mathbb{R}^{k}\times \mathcal{C} \, ,
$$
for some $k\in\lbrace 0,\dots,n\rbrace$ and $\mathcal{C}\subset\mathbb{R}^{n-k}$ is a convex cone that does not contain a line and with only one vertex $\mathcal{O}$ (from now on and for simplicity we will assume that $\mathcal{O}$ coincides with the origin). The Sobolev inequality in convex cones has been firstly established in \cite{LPT} and it has been generalized to the anisotropic setting in \cite{CROS} where it takes the following form: given $n\geq 3$ and $1<p<n$, there exists a positive constant $S_\Sigma=S_\Sigma(n,p,H)$ such that
\begin{equation}\label{Sobolev_cone}
\left(\int_{\Sigma} H(\nabla u)^p \, dx\right)^{\frac{1}{p}}\geq S_\Sigma\left(\int_{\Sigma} \vert u(x)\vert^{p^\ast} \, dx\right)^{\frac{1}{p^\ast}}\, ,  \quad \text{ for all $u\in \dot{W}^{1,p}(\Sigma)$}\, ;
\end{equation}
where $\Sigma\subset\mathbb{R}^n$ is a convex cone, $H$ is an anisotropic norm in $\mathbb{R}^n$ and 
$$
\dot{W}^{1,p}(\Sigma):=\left\lbrace u\in L^{p^\ast}(\Sigma) \, : \, \int_{\Sigma}H(\nabla u)^p\, dx<+\infty \right\rbrace\,  .
$$
The point is that in \cite{CROS} the characterization of extremals of \eqref{Sobolev_cone} (i.e. the sharp anisotropic Sobolev inequality in convex cones) is missing, indeed they deduce the anisotropic Sobolev inequality in convex cones as a corollary of the anisotropic isoperimetric inequality (the Wulff inequality, see Subsection \ref{isop} for more details) in convex cones.  Actually, in \cite{LPT} the sharp Sobolev inequality can be found in the case $p=2$ and $H(\cdot)=\vert\cdot\vert$. Exploiting the optimal transport approach, in \cite{CFR}, we prove the sharp anisotropic Sobolev inequality in convex cones computing the extremals and we show that they are of the following form: 
$$
\mathcal{U}_{a,\lambda,x_0}^H(x):=\frac{a}{\left( 1 + \lambda^\frac{p}{p-1} H_0(x_0-x)^\frac{p}{p-1} \right)^{\frac{n-p}{p}}} \, , 
$$
where $a\in\mathbb{R}$ and $ \lambda>0$ can be chosen arbitrary and where, as before,  $H_0$ is the dual norm of $H$. Moreover, if $\Sigma=\mathbb{R}^n$ then $x_0$ may be any point of $\mathbb{R}^n$, if $\Sigma=\mathbb{R}^k\times \mathcal{C}$ with $k\in  \lbrace\, 1,\dots,n-1\rbrace$ and $\mathcal{C}$ does not contain a line, then $x_0\in \mathbb{R}^k \times \lbrace \mathcal{O}\rbrace$, otherwise $x_0 = \mathcal{O}$. 

\subsection{Analogies with the isoperimetric inequality.}\label{isop} We conclude this introduction by mentioning that the same picture drown for the Sobolev inequality holds also for the isoperimetric inequality.  The classical Euclidean isoperimetric inequality states that, for any bounded open (smooth) set $E\subset\mathbb{R}^n$, the perimeter $P(E)$ controls the volume $\vert E\vert$: more precisely,
\begin{equation}\label{iso}
P(E)\geq n \vert B_1\vert^{\frac{1}{n}} \vert E\vert^{\frac{n-1}{n}} \, , 
\end{equation}
where $B_1$ is the unit ball in $\mathbb{R}^n$.  Moreover, the equality holds if and only if $E$ is a ball (see \cite{DeGiorgi} and also \cite{Osserman}). A way to define the perimeter (in the smooth setting) is the following: 
$$
P(E):=\int_{\partial E}\, d\mathcal{H}^{n-1} \, ,
$$
where $\mathcal{H}^{n-1}$ denotes the $(n-1)-$dimensional Hausdorff measure in $\mathbb{R}^n$ (for the general definition of perimeter we refer to the book \cite{Maggi}).  More in general we can consider the notion of anisotropic perimeter, i.e. the perimeter induced by an anisotropic norm $H$ in $\mathbb{R}^n$, i.e. 
$$
P_H(E):=\int_{\partial E}H(\nu(x)) \, d\mathcal{H}^{n-1}(x) \, ,
$$
where $\nu(x)$ denotes the unit outward normal at $x\in\partial E$. The corresponding anisotropic isoperimetric inequality, known as the Wullf's inequality is the following:
\begin{equation}\label{iso_anis}
P_H(E)\geq n \vert K\vert^{\frac{1}{n}} \vert E\vert^{\frac{n-1}{n}} \, , 
\end{equation}
where $K$ is the Wulff shape (or Alexandrov's body) associated to $H$, explicitly
$$
K:=\lbrace x\in\mathbb{R}^n \, :\, x\cdot\nu<H(\nu)\, , \, \text{ for all $\nu\in\mathbb{S}^{n-1}$}\rbrace\, .
$$
In addition, the equality holds if and only if $E = x + \rho K$ for some $x\in\mathbb{R}^n$ and $\rho>0$ (see \cite{Taylor1,Taylor2,Wulff} and also \cite{Fonseca}).  In the context of a convex cone $\Sigma$ of $\mathbb{R}^n$ the definition of perimeter becomes the following: 
$$
P(E,\Sigma):=\int_{\partial E\cap\Sigma}\, d\mathcal{H}^{n-1} \, ,
$$
i.e. we consider the relative perimeter of the set $E$ with respect to $\Sigma$. The analogue of \eqref{iso} in a convex cone $\Sigma$ of $\mathbb{R}^n$ is the following isoperimetric inequality: for every bounded open (smooth) set $E\subset\Sigma$ we have
$$
P(E,\Sigma)\geq n \vert B_1\cap\Sigma\vert^{\frac{1}{n}} \vert E\vert^{\frac{n-1}{n}} \, .
$$
Furthermore, if the cone contains no lines then the equality holds if and only if $E$ is a spherical sector centred at the vertex of the cone (see \cite{LionsPacella} and also \cite{APPS,RR,FI,CROS,BF,BCM,Indrei} for generalisations and different proofs). In particular, in \cite{CROS} and in \cite{FI} the authors prove the following Wulff inequality in a convex cone $\Sigma$:
\begin{equation}\label{iso_anis_cone}
P_H(E,\Sigma)\geq n \vert K\cap\Sigma\vert^{\frac{1}{n}} \vert E\vert^{\frac{n-1}{n}} \, ,
\end{equation}
where $P_H(E,\Sigma)$ denotes the anisotropic relative perimeter of the set $E$ with respect to $\Sigma$, i.e.
$$
P_H(E,\Sigma):=\int_{\partial E\cap\Sigma}H(\nu(x)) \, d\mathcal{H}^{n-1}(x) \,.
$$
Moreover, if the cone contains no lines then the equality holds in \eqref{iso_anis_cone} if and only if $E=\rho K$, for some $\rho>0$.

\medskip

\subsection*{Organization of the paper} The paper is organized as follows: in Section \ref{OT} we show how to apply the optimal transport approach to prove the sharp Sobolev inequality in a convex cone $\Sigma$ of $\mathbb{R}^n$, in Section \ref{FR} we investigate another important aspect related to the sharp Sobolev inequality, i.e. the characterization of critical points. Finally, in Section \ref{quant} we present some stability results related to the extremals and to the critical points of the Sobolev inequality.


\section{Proof by optimal transport}\label{OT}

The idea to apply the optimal transport theory to prove functional and geometric inequalities dates back to Gromov (see the appendix in the book \cite{Gromov}), who applied the Knothe map\footnote{in his original argument, Gromov did not use the optimal transport map but instead the Knothe map, see e.g. \cite[Section 1.4]{FMP} for more details.} to prove the isoperimetric inequality \eqref{iso}. This approach has been brilliantly used in \cite{FMP} to show the so-called Wulff inequality, i.e the anisotropic isoperimetric inequality \eqref{iso_anis}.  We mention also that in \cite{FI} the optimal transport approach is used to prove the anisotropic Sobolev inequality in a convex cone \eqref{iso_anis_cone}. 

Finally, the optimal transport approach has been also used to prove the Brunn-Minkowski inequality (see e.g.  \cite{MCann}) and to prove the anisotropic Gagliardo--Nirenberg inequalities (see \cite{CNV}). 

Here we sketch the proof of the sharp Sobolev inequality in a convex cone $\Sigma$ of $\mathbb{R}^n$ in the isotropic case (i.e. when $H$ is the Eulidean norm) and we refer to \cite[Sections 2 and 4]{CNV} and to \cite[Appendix A]{CFR} for the proof in the anisotropic setting in the case $\Sigma\equiv\mathbb{R}^n$ and in the conical case, respectively. In particular, we prove the following 

\begin{theorem}[Sharp Sobolev inequality in convex cones]\label{Sob_sharp}
Given $n\geq 3$ and $1<p<n$ let $\Sigma$ be a convex cone of $\mathbb{R}^n$. Then there exists a positive constant $S_{\Sigma}=S_{\Sigma}(n,p)$ such that 
\begin{equation}\label{Sob_proof}
\left(\int_{\Sigma} \vert\nabla u(x)\vert^p\, dx\right)^{\frac{1}{p}}\geq S_{\Sigma} \left(\int_{\Sigma} \vert u(x)\vert^{p^\ast}\, dx\right)^{\frac{1}{p^\ast}} \, ,  \quad \text{ for all $u\in \dot{W}^{1,p}(\Sigma)$}\, .
\end{equation}
Moreover, the inequality is sharp and the equality in \eqref{Sob_proof} is attained if and only if $u(x)=\mathcal{U}_p(x)$, where 
$$
\mathcal{U}_{a,\lambda,x_0}(x):=\frac{a}{\left( 1 + \lambda^\frac{p}{p-1} \vert x-x_0\vert)^\frac{p}{p-1} \right)^{\frac{n-p}{p}}} \, , 
$$
with $a\in\mathbb{R}$, $ \lambda>0$ and $x_0\in\overline{\Sigma}$. Furthermore,  writing $\Sigma = \mathbb{R}^k\times\mathcal{C}$ with $k \in \lbrace 0, \dots, n\rbrace$ and with $\mathcal{C}\subset \mathbb{R}^{n-k}$ a convex cone that does not contain a line and with only one vertex $\mathcal{O}$, then 
\begin{itemize}
	\item[$(i)$] if $k=n$ then $\Sigma = \mathbb{R}^n$ and $x_0$ may be a generic point in $\mathbb{R}^n$;
	\item[$(ii)$] if $k\in\{1,\dots,n-1\}$ then $x_0\in\mathbb{R}^k\times\mathcal{\{\mathcal O\}}$;
	\item[$(iii)$] if $k=0$ then $x_0=\mathcal{O}$.
\end{itemize} 
\end{theorem}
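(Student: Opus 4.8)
The plan is to follow the optimal transport scheme of \cite{CNV}, adapted to the conical setting as in \cite{CFR}. Write $p':=p/(p-1)$. By replacing $u$ with $|u|$, using the invariance of \eqref{Sob_proof} under dilations, and approximating in $\dot{W}^{1,p}(\Sigma)$, it suffices to prove \eqref{Sob_proof} for $u\ge 0$ smooth and compactly supported in $\overline{\Sigma}$; normalize so that $m:=\int_\Sigma u^{p^\ast}\,dx$ is prescribed. Fix the \emph{reference bubble} $v(y):=a\,(1+|y|^{p'})^{-(n-p)/p}$ restricted to $\Sigma$, with $a>0$ chosen so that $\int_\Sigma v^{p^\ast}\,dx=m$; the crucial algebraic feature is that $v^{p^\ast}(y)=a^{p^\ast}(1+|y|^{p'})^{-n}$, whereas $v^{-p^\ast/n}(y)=a^{-p^\ast/n}(1+|y|^{p'})$. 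Then I would introduce the Brenier map $T=\nabla\phi$, with $\phi$ convex, pushing $u^{p^\ast}\,dx$ forward onto $v^{p^\ast}\,dx$. Since $\Sigma$ is convex and coincides with the closure of the support of $v^{p^\ast}$, the map $T$ takes values in $\overline{\Sigma}$, and the Monge--Ampère equation $u^{p^\ast}(x)=v^{p^\ast}(T(x))\det D^2\phi(x)$ holds for a.e.\ $x$, with $D^2\phi$ the pointwise (Aleksandrov) Hessian.

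Next I would combine this with the arithmetic--geometric mean inequality $(\det D^2\phi)^{1/n}\le\tfrac1n\Delta_{\mathrm{ac}}\phi$, where $\Delta_{\mathrm{ac}}\phi$ denotes the density of the absolutely continuous part of the nonnegative distributional Laplacian of the convex function $\phi$. Taking $n$-th roots in the Monge--Ampère identity and multiplying by $u^{p^\ast(n-1)/n}$ gives, a.e.\ in $\Sigma$,
\[
u^{p^\ast}(x)\,v^{-p^\ast/n}(T(x))\;\le\;\tfrac1n\,u^{p^\ast(n-1)/n}(x)\,\Delta_{\mathrm{ac}}\phi(x).
\]
Integrating over $\Sigma$, estimating $\Delta_{\mathrm{ac}}\phi\,dx$ from above by the full measure $\Delta\phi$, and integrating by parts, the boundary term $\int_{\partial\Sigma}u^{p^\ast(n-1)/n}\,(T\cdot\nu_\Sigma)\,d\mathcal{H}^{n-1}$ is \emph{nonpositive}: at a point $x\in\partial\Sigma$ the supporting hyperplane of the convex cone passes through its vertex, hence $y\cdot\nu_\Sigma(x)\le 0$ for every $y\in\overline{\Sigma}$, in particular for $y=T(x)$. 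This is exactly where the convexity of the cone enters. One is left with
\[
\int_\Sigma u^{p^\ast}v^{-p^\ast/n}(T)\,dx\;\le\;-\tfrac1n\int_\Sigma\nabla\big(u^{p^\ast(n-1)/n}\big)\cdot T\,dx\;\le\;\tfrac{p^\ast(n-1)}{n^2}\int_\Sigma u^{p^\ast(n-1)/n-1}\,|\nabla u|\,|T|\,dx.
\]
Applying Hölder's inequality with exponents $p$ and $p'$ --- which fit because $p^\ast(n-1)/n-1=p^\ast/p'\;(=n(p-1)/(n-p))$ --- and using the change of variables for the pushforward, $\int_\Sigma|T|^{p'}u^{p^\ast}\,dx=\int_\Sigma|y|^{p'}v^{p^\ast}(y)\,dy=:J$. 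Since the left-hand side equals $a^{-p^\ast/n}\int_\Sigma u^{p^\ast}(1+|T|^{p'})\,dx=a^{-p^\ast/n}(m+J)$, we arrive at $a^{-p^\ast/n}(m+J)\le\tfrac{p^\ast(n-1)}{n^2}\big(\int_\Sigma|\nabla u|^p\,dx\big)^{1/p}J^{1/p'}$. The weighted AM--GM inequality $m+J\ge p^{1/p}(p')^{1/p'}\,m^{1/p}J^{1/p'}$, together with $m=a^{p^\ast}\int_\Sigma(1+|y|^{p'})^{-n}\,dy$, then gives $\big(\int_\Sigma|\nabla u|^p\big)^{1/p}\ge S_\Sigma\,m^{1/p^\ast}$ with $S_\Sigma$ explicit, depending only on $n$, $p$ and $\mathcal{H}^{n-1}(\Sigma\cap\mathbb{S}^{n-1})$; for $\Sigma=\mathbb{R}^n$ it reduces, after evaluating the resulting Beta integrals, to \eqref{opt_constant}.

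For the rigidity statement I would run the chain of inequalities backwards. Equality in \eqref{Sob_proof} forces, simultaneously: equality a.e.\ in AM--GM, i.e.\ $D^2\phi(x)=\lambda(x)\,\mathrm{Id}$; vanishing of the singular part of $\Delta\phi$; $\nabla u\parallel -T$ a.e.\ with equality in Hölder; and vanishing of the boundary term, i.e.\ $T(x)\cdot\nu_\Sigma(x)=0$ for $\mathcal{H}^{n-1}$-a.e.\ $x\in\partial\Sigma\cap\{u>0\}$. From the first two facts, together with the convexity of $\phi$, one deduces (using the regularity theory for the Monge--Ampère equation, or a direct $W^{2,1}$ argument on the connected open set $\{u>0\}$) that $\lambda$ is a constant $\mu_0$ and $\phi(x)=\tfrac{\mu_0}{2}|x|^2+b\cdot x+c$, so that $T(x)=\mu_0(x-x_0)$ is an affine dilation; inserting this and the explicit form of $v$ back into the Monge--Ampère identity identifies $u$ with $\mathcal{U}_{a,\lambda,x_0}$. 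Finally, since $x\cdot\nu_\Sigma(x)=0$ on $\partial\Sigma$, the boundary condition reads $x_0\cdot\nu_\Sigma(x)=0$ for a.e.\ boundary point, i.e.\ $x_0$ is orthogonal to every outward normal of $\Sigma$; writing $\Sigma=\mathbb{R}^k\times\mathcal{C}$, this means precisely that $x_0$ lies in the lineality space $\mathbb{R}^k\times\{\mathcal{O}\}$, giving cases $(i)$--$(iii)$, while conversely a direct check shows that for such $x_0$ the function $\mathcal{U}_{a,\lambda,x_0}$ saturates each step above, so these are exactly the extremals. I expect the main obstacle to be precisely this last step: extracting from ``the Aleksandrov Hessian of $\phi$ is a variable multiple of the identity a.e.'' the conclusion that $\phi$ is a genuine paraboloid. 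Because the densities $u^{p^\ast}$ and $v^{p^\ast}$ are in general neither bounded above nor bounded below, the Brenier map need not be smooth, and one must combine Monge--Ampère regularity with a careful analysis of the zero set $\{u=0\}$ and of the behaviour near $\partial\Sigma$ --- this is the reason the sharp statement lies deeper than the inequality, and it is carried out in detail in \cite{CNV,CFR}.
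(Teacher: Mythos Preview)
Your overall strategy matches the paper's almost step by step: Brenier map from $u^{p^\ast}$ to the bubble, Monge--Amp\`ere identity, arithmetic--geometric inequality on the eigenvalues of $D^2\phi$, the sign of the boundary term coming from convexity of $\Sigma$, H\"older, and then Monge--Amp\`ere regularity together with positivity of $u$ to upgrade ``$D^2\phi$ is a multiple of the identity a.e.'' to ``$T$ is an affine dilation''. Your reading of the boundary equality case, $x_0\cdot\nu_\Sigma\equiv 0$, as placing $x_0$ in the lineality space $\mathbb{R}^k\times\{\mathcal O\}$ is exactly what the paper means when it says the location of $x_0$ ``follows from the fact that $\mathcal T$ has to map $\Sigma$ onto $\Sigma$''.

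There is, however, one step that does not work as written and would cost you the sharpness. After arriving at
\[
a^{-p^\ast/n}(m+J)\;\le\;\tfrac{p(n-1)}{n(n-p)}\,\|\nabla u\|_{L^p(\Sigma)}\,J^{1/p'},
\]
you invoke the ``weighted AM--GM'' bound $m+J\ge p^{1/p}(p')^{1/p'}m^{1/p}J^{1/p'}$. That inequality is true, but equality in it requires $J/m=p/p'=p-1$, whereas with your fixed reference bubble the ratio is determined and equals
\[
\frac{J}{m}=\frac{\int_\Sigma |y|^{p'}(1+|y|^{p'})^{-n}\,dy}{\int_\Sigma (1+|y|^{p'})^{-n}\,dy}=\frac{n(p-1)}{n-p}\neq p-1,
\]
so your chain of inequalities is \emph{strict} even when $u=v$, and the resulting constant is not optimal. (Introducing a free scaling $\lambda$ in $v$ does not help: the factor of $\lambda$ cancels from both sides.) The fix is simply to drop this extra step: since $a$, $m$ and $J$ are tied by $m=a^{p^\ast}I_1$ and $J=a^{p^\ast}I_2$ with $I_1,I_2$ fixed integrals, the displayed inequality already reads $\|\nabla u\|_{L^p}\ge S_\Sigma\,m^{1/p^\ast}$ with an explicit $S_\Sigma$, and equality holds throughout precisely when $u=v$ (then $T=\mathrm{id}$). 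This is exactly how the paper proceeds: it packages the same computation as a duality between a general $f$ and $g$ (Lemma~\ref{key lemma}) and then observes that both sides are saturated by $\mathcal U_{1,1,\mathcal O}$, thereby avoiding any auxiliary AM--GM on $m$ and $J$.
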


As already mentioned a key ingredient in the proof of Theorem \ref{Sob_sharp} is the optimal transport theory (see \cite{AmGi,San,Villani_bis,Villani} for general references); in the next subsection we recall briefly some basic tools from optimal transport theory and we recall the main theorem that we are going to use. In what follows $\Sigma$ will be a convex cone of $\mathbb{R}^n$, even if some results are true more in general.

\subsection{Optimal Transport Theory}

Given two probability densities $F$ and $G$ on $\Sigma$ (i.e. two nonnegative functions in $L^1(\Sigma)$ such that $\Vert F\Vert_{L^1(\Sigma)}=\Vert G\Vert_{L^1(\Sigma)}=1$),  we say that a map $\mathcal{T}:\Sigma\rightarrow\Sigma$ sends $F$ onto $G$, and we will call it a \emph{transport map}, if 
$$
\mathcal{T}_\# F=G\, , 
$$
i.e. the push forward of $F$ through $\mathcal{T}$ is $G$. Explicitly, if
\begin{equation}\label{trans}
\int_E G(y)\, dy=\int_{\mathcal{T}^{-1}(E)} F(x)\, dx \, , \quad \text{for every $E$ Borel subset of $\Sigma$;}
\end{equation}
or equivalently,  if
\begin{equation}\label{trans_bis}
\int_{\Sigma} b(y)G(y)\, dy=\int_{{\Sigma}}b(\mathcal{T}(x)) F(x)\, dx \, , \quad \text{for every $b:\Sigma\rightarrow\mathbb{R}$ nonnegative Borel function.}
\end{equation}
Moreover, we consider the following cost function $c:\Sigma\times\Sigma\rightarrow\mathbb{R}$ given by $c(x,y)=\frac{\vert x-y\vert^2}{2}$ and we define the \emph{total cost of a transport map} $\mathcal{T}$ as
$$
\mathrm{cost}(\mathcal{T}):=\int_{\Sigma}c(x,\mathcal{T}(x))F(x)\, dx\, . 
$$
In general, we are interested in the \emph{optimal transport map}, i.e. the transport map $\mathcal{T}$ that minimizes the total cost, i.e. 
$$
\mathrm{cost}(\mathcal{T})=\min\lbrace \mathrm{cost}(\mathcal{S}) \, : \,  \mathcal{S}:\Sigma\rightarrow\Sigma  \, , \,  \mathcal{S}_\# F=G \rbrace\, . 
$$
The central ingredient is the following existence theorem for the optimal transport map (see e.g.  \cite{DeFi}).

\begin{theorem}\label{Brenier}
If $F$ and $G$ are two probability densities on $\Sigma$,  then there exist a convex function $\varphi:\Sigma\rightarrow\mathbb{R}$ such that the transport map $\mathcal{T}:\Sigma\rightarrow\Sigma$, defined by 
$$
\mathcal{T}(x):=\nabla\varphi(x)\, ,
$$
is the unique optimal transport map that sends $F$ onto $G$.  We will refer to $\mathcal{T}$ as the \emph{Brenier map}. Moreover,  $\mathcal{T}$ is differentiable $F(x)\, dx-$a.e. and 
\begin{equation}\label{MA}
\vert \mathrm{det}(\nabla \mathcal{T}(x))\vert=\frac{F(x)}{G(\mathcal{T}(x))} \, , \quad \text{ $F(x)\, dx-$a.e.  $x\in\Sigma$.}
\end{equation}
\end{theorem}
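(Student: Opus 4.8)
I will prove Theorem \ref{Sob_sharp} by the optimal transport method of \cite{CNV}, adapted to the cone as in \cite{CFR}. The strategy is to fix the conjectured extremal profile, use it as one of the two densities in an optimal transport problem, and extract the inequality from the Monge--Amp\`ere equation \eqref{MA} together with the arithmetic--geometric mean inequality and an integration by parts on the cone.

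\smallskip

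\emph{Step 1: reduction to nice functions and normalization.} By density and truncation it suffices to prove \eqref{Sob_proof} for nonnegative $u\in C^\infty_c(\overline\Sigma)$ (radial monotonicity is not assumed, only decay); the general case in $\dot W^{1,p}(\Sigma)$ follows by approximation, and the characterization of equality cases is stable under this approximation because the conjectured extremals lie in $\dot W^{1,p}(\Sigma)$. Set $f:=u^{p^\ast}/\|u\|_{L^{p^\ast}}^{p^\ast}$, so that $f$ is a probability density on $\Sigma$. Let $g:=\mathcal U^{p^\ast}/\|\mathcal U\|_{L^{p^\ast}(\Sigma)}^{p^\ast}$ where $\mathcal U(x)=(1+|x|^{p/(p-1)})^{-(n-p)/p}$ is the model profile (after the rescaling that matches masses, one may take $a,\lambda$ so that $g$ is a probability density on $\Sigma$); a direct computation shows $\mathcal U$ solves the critical $p$-Laplace equation on $\Sigma$ with zero Neumann data, which is why it is the natural target.

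\smallskip

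\emph{Step 2: transport $f$ onto $g$ and exploit convexity of the cone.} Apply Theorem \ref{Brenier} to obtain the Brenier map $\mathcal T=\nabla\varphi$ with $\varphi$ convex, $\mathcal T_\# f=g$, $\mathcal T$ differentiable $f\,dx$-a.e., and $\det(\nabla\mathcal T(x))=f(x)/g(\mathcal T(x))$ a.e. Here the fact that $\Sigma$ is convex is used twice: it guarantees $\mathcal T(x)\in\overline\Sigma$ for $f$-a.e. $x$ (so all terms below make sense on $\Sigma$), and, crucially, it makes the boundary term in the forthcoming integration by parts have a favorable sign, namely $\nabla\varphi(x)\cdot\nu(x)\le 0$ on $\partial\Sigma$ by convexity of $\Sigma$ (here $\nu$ is the outer normal). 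The matrix $\nabla\mathcal T=\Hess\varphi$ is symmetric positive semidefinite, so $\det(\nabla\mathcal T)\le (\operatorname{div}\mathcal T/n)^n$ by AM--GM on the eigenvalues.

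\smallskip

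\emph{Step 3: the transport inequality.} Write $G(y):=g(y)^{-1/n}$. Using $\mathcal T_\# f=g$ in the form \eqref{trans_bis} with $b=G$, and then the Monge--Amp\`ere equation to replace $g(\mathcal T(x))^{-1/n}=\big(\det\nabla\mathcal T(x)\big)^{1/n} f(x)^{-1/n}$, one gets
\begin{equation}\label{eq:transport-step}
\int_\Sigma g(y)^{1-1/n}\,dy=\int_\Sigma \big(\det\nabla\mathcal T(x)\big)^{1/n} f(x)^{1-1/n}\,dx\le \frac1n\int_\Sigma \operatorname{div}\mathcal T(x)\, f(x)^{1-1/n}\,dx.
\end{equation}
Now integrate by parts in the last integral, moving the divergence off $\mathcal T$ and onto $f^{1-1/n}$. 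The boundary contribution on $\partial\Sigma$ is $\int_{\partial\Sigma} (\mathcal T\cdot\nu)\, f^{1-1/n}\le 0$ by Step 2, so it may be discarded (formally $f$ is compactly supported in $\overline\Sigma$ and one localizes; the rigorous version handles the a.e.\ differentiability and the distributional divergence as in \cite{CNV}). This yields, with the choice $1-1/n$ matched to $p^\ast$ via $f=u^{p^\ast}/\|u\|^{p^\ast}_{p^\ast}$ and the identity $p^\ast(1-1/n)=p^\ast-p^\ast/n = \frac{n-p}{n-1}\cdot\frac{?}{}$ — the exponent bookkeeping is the computation one does here — an estimate of the shape
\begin{equation}\label{eq:after-ibp}
\int_\Sigma g^{1-1/n}\,dy\le -\frac1n\int_\Sigma \mathcal T(x)\cdot\nabla\!\big(u^{p^\ast(1-1/n)}\big)\,\frac{dx}{\|u\|_{p^\ast}^{p^\ast(1-1/n)}}\le \frac{C(n,p)}{n}\,\frac{\|\,|\mathcal T|\,u^{\,?}\nabla u\,\|_{L^1}}{\|u\|_{p^\ast}^{\,?}}.
\end{equation}
Finally apply H\"older with exponents $p$ and $p/(p-1)$ to split $|\mathcal T|\cdot(\text{power of }u)$ and $|\nabla u|$, and use $\mathcal T_\# f=g$ once more to evaluate $\int_\Sigma |\mathcal T(x)|^{p/(p-1)} f(x)\,dx=\int_\Sigma |y|^{p/(p-1)} g(y)\,dy$, a finite explicit constant depending on $g$ (this is where $1<p<n$ enters, ensuring integrability). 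Collecting the constants gives \eqref{Sob_proof} with the sharp $S_\Sigma$.

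\smallskip

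\emph{Step 4: sharpness and equality cases.} Sharpness is immediate: plugging $u=\mathcal U$ forces $f=g$, hence $\mathcal T=\mathrm{id}$, and every inequality above (AM--GM, discarding the boundary term, H\"older) becomes an equality, so the constant produced is attained. Conversely, suppose equality holds for some $u$. Then equality in AM--GM forces $\Hess\varphi$ to be a multiple of the identity a.e.\ on $\{f>0\}$, so $\mathcal T(x)=\lambda x + x_0$ on that set; equality in H\"older forces $|\nabla u|^p$ proportional to $u^{p^\ast}$ (pointwise), which upon integrating the resulting ODE along rays gives exactly the profile $\mathcal U_{a,\lambda,x_0}$; and equality in ``discard the boundary term'' forces $\mathcal T\cdot\nu=0$ on $\partial\Sigma\cap\{f>0\}$, i.e.\ the affine map $x\mapsto\lambda x+x_0$ maps $\Sigma$ into $\Sigma$ with $x_0+\partial\Sigma$ touching $\partial\Sigma$ tangentially. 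Decomposing $\Sigma=\mathbb R^k\times\mathcal C$: on the $\mathbb R^k$ factor there is no constraint, so $x_0$ is free there; on the $\mathcal C$ factor, an affine translate of the cone $\mathcal C$ (which contains no line) can be contained in $\mathcal C$ only if the translation vector is $0$ (equivalently $x_0$ projects to the vertex $\mathcal O$), giving cases $(i)$, $(ii)$, $(iii)$. This is the delicate part of the argument.

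\smallskip

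\emph{Main obstacle.} The genuine difficulties are analytic rather than algebraic: (a) justifying the integration by parts in \eqref{eq:transport-step}--\eqref{eq:after-ibp} when $\mathcal T$ is only a.e.\ differentiable and its distributional divergence may have a singular (nonnegative) part — one must show this singular part only helps, which is the technical heart of \cite{CNV}; and (b) the rigidity analysis of Step 4, i.e.\ upgrading ``equality in every step'' to the precise list $(i)$--$(iii)$, which requires tracking where the support of $f$ meets $\partial\Sigma$ and using that $\mathcal C$ contains no line. Both are handled exactly as in \cite{CNV} (for $\Sigma=\mathbb R^n$) and \cite{CFR} (for the conical and anisotropic case), to which I refer for the full details.
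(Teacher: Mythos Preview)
You are proving the wrong theorem. The statement under consideration is Theorem~\ref{Brenier} --- the existence, uniqueness, a.e.\ differentiability, and Monge--Amp\`ere relation for the Brenier map --- but your entire proposal is an argument for Theorem~\ref{Sob_sharp}, the sharp Sobolev inequality in convex cones. Indeed, in your Step~2 you \emph{invoke} Theorem~\ref{Brenier} as a black box (``Apply Theorem~\ref{Brenier} to obtain the Brenier map $\mathcal T=\nabla\varphi$ \dots''), so nothing in your write-up addresses the actual content of the statement you were asked to prove.

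The paper itself does not give a proof of Theorem~\ref{Brenier}; it is quoted as a known result with a reference (``see e.g.\ \cite{DeFi}''), going back to Brenier and McCann. A genuine proof would require Kantorovich duality, $c$-cyclical monotonicity to show the optimal plan is supported on the graph of a gradient of a convex function, uniqueness from the structure of convex subdifferentials, and Aleksandrov's theorem on second-order differentiability of convex functions to obtain the a.e.\ Jacobian equation~\eqref{MA}. None of this appears in your proposal. (As a side remark: even taken as a sketch of Theorem~\ref{Sob_sharp}, your Step~3 leaves the exponent bookkeeping literally as question marks, so it would not stand as a proof of that result either; but this is secondary to the main issue.)
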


Observe that if $\mathcal{T}$ is a diffeomorphism, then the change of variables $y=\mathcal{T}(x)$ in \eqref{trans} and in \eqref{trans_bis} shows that $\mathcal{T}$ solves \eqref{MA}, which in terms of $\varphi$ becomes the following Monge-Ampère equation:
\begin{equation}\label{MA_bis}
\vert \mathrm{det}(\nabla^2 \varphi (x))\vert=\frac{F(x)}{G(\nabla\varphi(x))} \, .
\end{equation}
More in general,  \eqref{MA} and \eqref{MA_bis} hold for $F(x)\, dx-$a.e.  $x\in\Sigma$ without further assumptions on $F$ and $G$ below integrability (see e.g.  \cite[Chapter 10]{Villani_bis} and \cite[Theorem 3.6]{DeFi}). In this case, being $\varphi$ a convex function it admits, almost everywhere, a second-order Taylor expansion as shown in \cite[Chapter 6]{EG}.

\subsection{Proof of Theorem \ref{Sob_sharp}}

The first step in the proof of Theorem \ref{Sob_sharp} is the following 

\begin{lemma}\label{key lemma}
Given $n\geq 3$, $1<p<n$ and $q=p/(p-1)$.  Let $\Sigma$ be a convex cone of $\mathbb{R}^n$.  Whenever $f\in \dot{W}^{1,p}(\Sigma)$ and $g\in L^{p^\ast}(\Sigma)$ are two functions with $\Vert f\Vert_{L^{p^\ast}(\Sigma)}=\Vert g\Vert_{L^{p^\ast}(\Sigma)}$, then 
\begin{equation}\label{formula}
\int_{\Sigma}\vert g(x)\vert^{p^\ast\left(1-\frac{1}{n}\right)}\, dx \leq \frac{p(n-1)}{n(n-p)} \Vert \nabla f\Vert_{L^p(\Sigma)} \left(\int_{\Sigma}\vert y\vert^q \vert g(y)\vert^{p^\ast}\, dy\right)^{\frac{1}{q}}\, , 
\end{equation}
with equality if 
$$
f(x)=g(x)=\mathcal{U}_{1,1,\mathcal{O}}(x)=\frac{1}{\left( 1 +  \vert x\vert)^\frac{p}{p-1} \right)^{\frac{n-p}{p}}}\, ,
$$
recall that $\mathcal{O}$ coincides with the origin.
\end{lemma}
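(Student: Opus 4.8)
The plan is to follow the optimal transport scheme of \cite{CNV}, adapted to the cone. First, since both sides of \eqref{formula} scale in the same way under $(f,g)\mapsto(tf,tg)$ — this is the identity $p^\ast(1-\tfrac1n)=1+p^\ast/q$ — and since the inequality is trivial when $\|\nabla f\|_{L^p(\Sigma)}$ or $\int_\Sigma|y|^q|g|^{p^\ast}\,dy$ is infinite, I would normalize $\|f\|_{L^{p^\ast}(\Sigma)}=\|g\|_{L^{p^\ast}(\Sigma)}=1$ and set $F:=|f|^{p^\ast}$, $G:=|g|^{p^\ast}$, two probability densities on $\Sigma$. By a preliminary (and somewhat technical) smoothing step I would reduce to the case in which $f,g$ are smooth, strictly positive on $\overline\Sigma$ and decay fast enough at infinity, so that the manipulations below are legitimate; the general case then follows by approximation. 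Theorem \ref{Brenier} then provides a convex potential $\varphi$ whose gradient $\mathcal{T}=\nabla\varphi$ is the Brenier map pushing $F$ onto $G$, with the Monge--Amp\`ere relation \eqref{MA}.

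The heart of the argument is the following chain. Using the transport identity \eqref{trans_bis} with $b=G^{-1/n}$ and then \eqref{MA} (recall $\nabla\mathcal{T}=\nabla^2\varphi\ge0$, so its determinant is nonnegative),
\[
\int_\Sigma|g|^{p^\ast(1-\frac1n)}\,dx=\int_\Sigma G^{1-\frac1n}\,dx=\int_\Sigma\big(\det\nabla^2\varphi(x)\big)^{1/n}F(x)^{1-\frac1n}\,dx .
\]
Then the arithmetic--geometric mean inequality $(\det\nabla^2\varphi)^{1/n}\le\frac1n\operatorname{tr}(\nabla^2\varphi)$ (valid since $\nabla^2\varphi\ge0$), together with the fact that the distributional Laplacian of the convex function $\varphi$ is a nonnegative measure dominating the absolutely continuous density $\operatorname{tr}(\nabla^2\varphi)$, gives $\int_\Sigma|g|^{p^\ast(1-\frac1n)}\,dx\le\frac1n\int_\Sigma F^{1-\frac1n}\,d(\Delta\varphi)$. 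Integrating by parts produces the boundary term $\frac1n\int_{\partial\Sigma}F^{1-\frac1n}\,\nabla\varphi\cdot\nu\,d\mathcal{H}^{n-1}$, which is $\le0$: the Brenier map sends $\Sigma$ into $\overline\Sigma$, and for a convex cone with vertex at the origin every supporting hyperplane passes through $0$, so $y\cdot\nu(x)\le0$ for all $y\in\overline\Sigma$ and a.e.\ $x\in\partial\Sigma$. Discarding it leaves $\int_\Sigma|g|^{p^\ast(1-\frac1n)}\,dx\le-\frac1n\int_\Sigma\nabla\big(F^{1-\frac1n}\big)\cdot\nabla\varphi\,dx$.

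To conclude, I would use $p^\ast(1-\tfrac1n)-1=p^\ast/q=\tfrac{n(p-1)}{n-p}$ to write $\nabla(F^{1-\frac1n})=p^\ast(1-\tfrac1n)|f|^{p^\ast/q}\operatorname{sgn}(f)\nabla f$, bound $-\operatorname{sgn}(f)\nabla f\cdot\nabla\varphi\le|\nabla f|\,|\nabla\varphi|$, and apply H\"older with exponents $p$ and $q$:
\[
\int_\Sigma|g|^{p^\ast(1-\frac1n)}\,dx\le\frac{p^\ast(1-\frac1n)}{n}\,\|\nabla f\|_{L^p(\Sigma)}\left(\int_\Sigma|f(x)|^{p^\ast}|\nabla\varphi(x)|^q\,dx\right)^{1/q}.
\]
Finally the transport identity \eqref{trans_bis} with $b(y)=|y|^q$ rewrites $\int_\Sigma|f|^{p^\ast}|\nabla\varphi|^q\,dx=\int_\Sigma|y|^q|g(y)|^{p^\ast}\,dy$, and since $\frac1n p^\ast(1-\frac1n)=\frac{p(n-1)}{n(n-p)}$ this is \eqref{formula}. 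For the equality case $f=g=\mathcal{U}_{1,1,\mathcal{O}}$ one has $F=G$, so by uniqueness in Theorem \ref{Brenier} the Brenier map is the identity $\varphi(x)=|x|^2/2$; then AM--GM holds with equality (all eigenvalues equal $1$), the boundary term vanishes since $x\cdot\nu=0$ on $\partial\Sigma$, and a direct computation shows $|\nabla\mathcal{U}_{1,1,\mathcal{O}}|^p$ is a constant multiple of $|\mathcal{U}_{1,1,\mathcal{O}}|^{p^\ast}|x|^q$ with $\nabla\mathcal{U}_{1,1,\mathcal{O}}$ antiparallel to $x$, so H\"older is an equality too.

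The main obstacle I expect is the integration-by-parts step and the sign of the boundary term: one has to handle that a convex $\varphi$ is only twice differentiable a.e.\ and that $\Delta\varphi$ carries a nonnegative singular part, that $\partial\Sigma$ is in general only Lipschitz, and — crucially — one must justify that $\nabla\varphi$ really sends (almost all of) $\Sigma$ into $\overline\Sigma$ in a way that controls the boundary integrand; this is precisely where the convexity of the cone enters, and it is also what dictates how the preliminary approximation of $f,g$ must be arranged. The remaining ingredients (AM--GM, H\"older, and the arithmetic with the exponents) are routine.
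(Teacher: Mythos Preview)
Your proposal is correct and follows essentially the same optimal-transport route as the paper: normalize, push $F=|f|^{p^\ast}$ onto $G=|g|^{p^\ast}$ via the Brenier map, combine the transport identity with Monge--Amp\`ere and AM--GM, integrate by parts and discard the boundary term using the convexity of the cone, then apply H\"older and transport back. The only minor difference is technical: the paper sidesteps the singular part of $\Delta\varphi$ by reducing to smooth compactly supported $f,g$ and invoking the interior regularity of \cite{CoFi} to make $\mathcal{T}$ a genuine diffeomorphism, whereas you allude to the measure-theoretic inequality $\operatorname{tr}(\nabla^2\varphi)\,dx\le\Delta\varphi$; either device works here.
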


\begin{proof}[Proof of Lemma \ref{key lemma}]
First of all, it is well-known that whenever $f\in\dot{W}^{1,p}(\Sigma)$ then $\nabla\vert f\vert=\pm\nabla f$ almost everywhere, so $f$ and $\vert f\vert$ have equal Sobolev norms. Thus, without loss of generality, we may assume that $f$ and $g$ are nonnegative and, by homogeneity, satisfy
$$
\Vert f\Vert_{L^{p^{\ast}}(\Sigma)}=1=\Vert g\Vert_{L^{p^{\ast}}(\Sigma)}\, . 
$$
Secondly, we prove \eqref{formula} in the special case when $f$ and $g$ are smooth functions with compact support inside $\overline{\Sigma}$; the general case will follow by approximation.

Now, consider the two probability densities 
$$
F(x)=f^{p^{\ast}}(x) \quad \text{and} \quad G(x)=g^{p^{\ast}}(x)\, . 
$$
on $\Sigma$.  Let $\mathcal{T}:\Sigma\rightarrow\Sigma$ be the optimal transport map given by Theorem \ref{Brenier}. Thanks to the regularity theory developed in \cite{CoFi} we know that $\mathcal{T}$ is a diffeomorphism and it satisfies 
\begin{equation}\label{MA_ter}
\vert \mathrm{det}(\nabla \mathcal{T}(x))\vert=\frac{f^{p^\ast}(x)}{g^{p^\ast}(\mathcal{T}(x))} \, , \quad \text{ for all $x\in\Sigma$.}
\end{equation}
From the definition of $F$ and $G$ and from the transport condition \eqref{trans_bis}, with $b(y)=G^{-\frac{1}{n}}(y)$ , we know that 
\begin{multline}\label{A.4}
\int_{\Sigma} g^{p^\ast\left(1-\frac{1}{n}\right)}(y) \, dy=\int_{\Sigma} G^{1-\frac{1}{n}}(y) \, dy=\int_{\Sigma} G^{-\frac{1}{n}}(y) G(y) \, dy\\ = \int_{\Sigma} G^{-\frac{1}{n}}(\mathcal{T}(x)) F(x)\, dx =\int_{\Sigma} g^{-\frac{p^\ast}{n}}(\mathcal{T}(x)) f^{p^\ast}(x)\, dx \, .
\end{multline}
While, from \eqref{MA_ter} we get 
\begin{equation}\label{A.5}
\int_{\Sigma} g^{-\frac{p^\ast}{n}}(\mathcal{T}(x)) f^{p^\ast}(x)\, dx =\int_{\Sigma} \vert\mathrm{det}(\nabla\mathcal{T}(x))\vert^\frac{1}{n}f^{p^\ast\left(1-\frac{1}{n}\right)}(x) \, dx\, . 
\end{equation}
Since $\mathcal{T}=\nabla\varphi$, for some convex function $\varphi:\Sigma\rightarrow\mathbb{R}$, then $\nabla \mathcal T=\nabla^2\varphi$ is symmetric and non-negative definite. In particular $\mathrm{det}(\nabla\mathcal{T})\geq 0$ and from the arithmetic-geometric inequality we get 
\begin{equation}\label{AMGM}
\vert\mathrm{det}(\nabla\mathcal{T})\vert^\frac{1}{n}\leq \frac{1}{n} 	\mathrm{div}(\mathcal{T})\, . 
\end{equation}
Hence, combining \eqref{AMGM} with \eqref{A.4} and \eqref{A.5}, 
\begin{multline}
\int_{\Sigma} g^{p^\ast\left(1-\frac{1}{n}\right)}(y) \, dy\leq \frac{1}{n} \int_{\Sigma} \mathrm{div}(\mathcal{T}(x))f^{p^\ast\left(1-\frac{1}{n}\right)}(x) \, dx\\ = -\dfrac{p^\ast}{n}\left(1-\dfrac{1}{n}\right)\int_{\Sigma} f^{p^\ast\left(1-\frac{1}{n}\right)-1}(x) \mathcal{T}(x)\cdot\nabla f(x)\, dx +\frac{1}{n}\int_{\partial\Sigma} f^{p^\ast\left(1-\frac{1}{n}\right)}(x) T(x)\cdot\nu(x)\, d\sigma(x)\, . 
\end{multline}
where we have used the integration by parts formula. Now, observe that since $\mathcal{T}(x)\in\overline{\Sigma}$, for all $x\in\overline{\Sigma}$, the convexity of $\Sigma$ implies that 
\begin{equation}\label{normal}
\mathcal{T}(x)\cdot\nu(x)\leq 0\, , \quad \text{ for all $x\in\partial\Sigma$}\, .
\end{equation}
Thus, 
\begin{equation}\label{dddd}
\int_{\Sigma} g^{p^\ast\left(1-\frac{1}{n}\right)}(y) \, dy\leq -\dfrac{p^\ast}{n}\left(1-\dfrac{1}{n}\right)\int_{\Sigma} f^{p^\ast\left(1-\frac{1}{n}\right)-1}(x) \mathcal{T}(x)\cdot\nabla f(x)\, dx \, , 
\end{equation}
and from Hölder's inequality we conclude that 
\begin{multline}\label{Holder}
-\int_{\Sigma} f^{p^\ast\left(1-\frac{1}{n}\right)-1}(x) \mathcal{T}(x)\cdot\nabla f(x)\, dx \leq \Vert\nabla f\Vert_{L^p(\Sigma)} \left( \int_{\Sigma} \vert\mathcal{T}(x)\vert^{q} f^{p^\ast}(x)\, dx\right)^{\frac{1}{q}} \\
=\Vert\nabla f\Vert_{L^p(\Sigma)} \left( \int_{\Sigma} \vert y\vert^{q} g^{p^\ast}(y)\, dy\right)^{\frac{1}{q}}\, ,
\end{multline}
hence 
$$
\int_{\Sigma} g^{p^\ast\left(1-\frac{1}{n}\right)}(y) \, dy\leq \dfrac{p^\ast}{n}\left(1-\dfrac{1}{n}\right)\Vert\nabla f\Vert_{L^p(\Sigma)} \left( \int_{\Sigma} \vert y\vert^{q} g^{p^\ast}(y)\, dy\right)^{\frac{1}{q}}\,, 
$$
which is \eqref{formula} since 
$$
\dfrac{p^\ast}{n}\left(1-\dfrac{1}{n}\right)=\dfrac{p(n-1)}{n(n-p)}\, . 
$$
In the special case $f=g=\mathcal{U}_{1,1,\mathcal{O}}$, the Brenier map coincides with the identity map, i.e. $\mathcal{T}(x)=x$ and $\mathrm{det}(\nabla\mathcal{T})=1$.  This implies that the inequalities in \eqref{AMGM} and in \eqref{normal} become equalities; in particular also in \eqref{dddd} the inequality becomes equality. Moreover from a direct computation one can show that
$$
-\int_{\Sigma}\mathcal{U}_{1,1,\mathcal{O}}^{\frac{p^\ast}{q}}(x)\nabla\mathcal{U}_{1,1,\mathcal{O}}(x)\cdot x\, dx=\Vert\nabla\mathcal{U}_{1,1,\mathcal{O}}\Vert_{L^p(\Sigma)}\left(\int_{\Sigma}\vert x\vert^q \mathcal{U}_{1,1,\mathcal{O}}^{p^\ast}(x)\, dx\right)^{\frac{1}{q}}\, , 
$$
which ensures that also in \eqref{Holder} the equality holds. This is the end of the proof of the Lemma.
\end{proof}

We are now in position to prove Theorem \ref{Sob_sharp}

\begin{proof}[Proof of Theorem \ref{Sob_sharp}]
An immediate consequence of Lemma \ref{key lemma} is the following duality principle:
\begin{equation}\label{duality}
\sup_{\Vert g\Vert_{L^{p^\ast}(\Sigma)}=1}\frac{\int_{\Sigma}\vert g(x)\vert^{p^\ast\left(1-\frac{1}{n}\right)}\, dx}{\int_{\Sigma}\vert y\vert^q \vert g(y)\vert^{p^\ast}\, dy} = \frac{p(n-1)}{n(n-p)}\inf_{\Vert f\Vert_{L^{p^\ast}(\Sigma)}=1} \Vert \nabla f\Vert_{L^p(\Sigma)}\, . 
\end{equation}
with $\mathcal{U}_{1,1,\mathcal{O}}$ extremal in both variational problems. 

From \eqref{duality} the proof of \eqref{Sob_proof} is immediate, indeed let $u\in \dot W^{1,p}(\Sigma)$ and let $\mathcal{U}_{1,1,\mathcal{O}}$ be the extremal, then from \eqref{duality} we have 
$$
\dfrac{\Vert \nabla \mathcal{U}_{1,1,\mathcal{O}}\Vert_{L^p(\Sigma)}}{\Vert \mathcal{U}_{1,1,\mathcal{O}}\Vert_{L^{p^\ast}(\Sigma)}}=\inf_{\Vert f\Vert_{L^{p^\ast}(\Sigma)}=1} \Vert \nabla f\Vert_{L^p(\Sigma)}\leq
\dfrac{\Vert \nabla u\Vert_{L^p(\Sigma)}}{\Vert u\Vert_{L^{p^\ast}(\Sigma)}}
$$
thus, 
$$
\Vert \nabla u\Vert_{L^p(\Sigma)}\geq \dfrac{\Vert \nabla \mathcal{U}_{1,1,\mathcal{O}}\Vert_{L^p(\Sigma)}}{\Vert \mathcal{U}_{1,1,\mathcal{O}}\Vert_{L^{p^\ast}(\Sigma)}} \Vert u\Vert_{L^{p^\ast}(\Sigma)}
$$
which is \eqref{Sob_sharp}. The fact that the inequality is sharp follows from the fact that $u=\mathcal{U}_{1,1,\mathcal{O}}$ realizes the equality.

Now we deal with the characterization of extremals.  We want to prove the following: 
\begin{center}
A function $u\in \dot{W}^{1,p}(\Sigma)$ realizes the equality in the Sobolev inequality \eqref{Sob_proof} if and only if there exist $a\in\mathbb{R}$, $\lambda>0$ and $x_0\in\overline{\Sigma}$ as in $(i)-(ii)-(iii)$ such that
$$
u(x)=a\, \mathcal{U}_{1,1,\mathcal{O}}(\lambda(x-x_0))=\mathcal{U}_{a,\lambda,x_0}(x)\, .
$$
\end{center}

To prove this fact we follow the approach in \cite[Appendix A]{CFR}, in \cite[Section 4]{CNV} and in \cite[Appendix A]{FMP}. Firstly we observe that if $u$ is an extremal, then also $\vert u\vert$ will be an extremal and then the conclusion of the theorem will force $u$ to have constant sign. Hence, it is enough to consider nonnegative functions $u\in \dot{W}^{1,p}(\Sigma)$.  Secondly, given two nonnegative measurable functions $f$ and $g$ such that $\int_{\Sigma}f^{p^\ast}=\int_{\Sigma}g^{p^\ast}$ then saying that $f(x)=a\, g(\lambda(x-x_0)$, for $a\in\mathbb{R}$, $\lambda>0$ and $x_0\in\overline{\Sigma}$,  is equivalent to say that the Brenier map $\mathcal{T}:\Sigma\rightarrow\Sigma$ which sends $f^{p^\ast}$ onto $g^{p^\ast}$ is of the form $\mathcal{T}(x)=\tilde\lambda(x-x_0)$, for some $\tilde\lambda>0$ and $x_0\in\overline{\Sigma}$. Furthermore, we can assume that $\int_{\Sigma}u^{p^\ast}=1$. So thanks to Lemma \ref{key lemma} and to the first part of the proof of this theorem, we just have to set $g=\mathcal{U}_{1,1,\mathcal{O}}$ and prove that: 
\begin{center}
a nonegative function $u\in \dot{W}^{1,p}(\Sigma)$ such that $\Vert u\Vert_{L^{p^\ast}(\Sigma)}=\Vert \mathcal{U}_{1,1,\mathcal{O}}\Vert_{L^{p^\ast}(\Sigma)}(=1)$ achieves equality in \eqref{formula} if and only if there exist $a\in\mathbb{R}$, $\lambda>0$ and $x_0\in\overline{\Sigma}$ as in $(i)-(ii)-(iii)$ such that 
$$
u(x)=a\, \mathcal{U}_{1,1,\mathcal{O}}(\lambda(x-x_0))\, ,
$$
\end{center}

The main issue is that we have proved Lemma \ref{key lemma} (i.e.  \eqref{formula}) in the case when $f$ and $g$ are compactly supported, this restriction on $f$ and $g$ had no implication on the final inequality but it prevents to preserve the equality cases.

The idea is to proceed in two steps: 
\begin{itemize}
\item[1)] generalize the proof of \eqref{formula} in order to obtain the right inequality (i.e. for all admissible $f$ and $g$ not necessarily smooth and with compact support);
\item[2)] trace back all the equality cases in previous proof, without further assumptions on $f$ and $g$.
\end{itemize}
Observe that once this is done then the equality in the arithmetic-geometric inequality \eqref{AMGM} would imply that $\nabla\mathcal{T}$ is a point-wise multiple of the identity, from which it is easy to show that $\mathcal{T}(x)=\lambda(x-x_0)$,  for some $\lambda>0$ and $x_0\in\overline{\Sigma}$. 

The first step can be done as in \cite[Proof of Lemma 7]{CNV} by showing that the following inequality
$$
\int_{\Sigma} \mathrm{div}(\mathcal{T}(x))u^{p^\ast\left(1-\frac{1}{n}\right)}(x) \, dx\leq -\dfrac{p^\ast}{n}\left(1-\dfrac{1}{n}\right)\int_{\Sigma} u^{p^\ast\left(1-\frac{1}{n}\right)-1}(x) \mathcal{T}(x)\cdot\nabla u(x)\, dx \, ,
$$
holds true. This step can be performed by approximation and regularization and we refer to \cite[Proof of Lemma 7]{CNV} for the technical details.

Once we have this formula we prove that $u$ is positive.  This can be done arguing as in \cite[Step 1 of the proof of Proposition 6]{CNV} or arguing as in \cite[Appendix A]{CFR} by using the notion of indecomposability\footnote{A set of finite perimeter $E$ is said the be indecomposable if for every $F\subseteq E$ having finite perimeter and such that
$$
\mathrm{Per}(E)=\mathrm{Per}(F)+ \mathrm{Per}(E\setminus F)\, ,
$$
we have that
$$
\min\lbrace \vert E\vert,\vert E\setminus F\vert\rbrace=0\, . 
$$
This is a measure-theoretic notion similar to the topological notion of connectedness and we refer to \cite{ACMM} for more details.} of the support of $u$.  Here we sketch this second proof. Assume, by contradiction, that the support of $u$ is decomposable, i.e. one could write $u=u_1+u_2$, where $u_1$ and $u_2$ have disjoint supports. Then, of course
$$
\int_\Sigma \vert \nabla u(x)\vert^p\, dx=\int_\Sigma \vert \nabla u_1(x)\vert^p\, dx+\int_\Sigma \vert \nabla u_2(x)\vert^p\, dx \,  ,
$$ 
while from \eqref{Sob_sharp} and the fact that $u$ is an extremal, we would get 
\begin{align*}
\left(\int_\Sigma u(x)^{p^\ast}\, dx\right)^{\frac{p}{p^\ast}}&=\frac{1}{S_\Sigma^p}\int_\Sigma \vert \nabla u(x)\vert^p\, dx \\
&=\frac{1}{S_\Sigma^p}\int_\Sigma \vert \nabla u_1(x)\vert^p\, dx+\frac{1}{S_\Sigma^p}\int_\Sigma \vert \nabla u_2(x)\vert^p\, dx \\
&\geq \left(\int_\Sigma u_1(x)^{p^\ast}\, dx\right)^{\frac{p}{p^\ast}} + \left(\int_\Sigma u_2(x)^{p^\ast}\, dx\right)^{\frac{p}{p^\ast}} \, .
\end{align*}
Since $u_1$ and $u_2$ have disjoint support, then
$$
\int_\Sigma  u(x)^p\, dx=\int_\Sigma  u_1(x)^p\, dx+\int_\Sigma  u_2(x)^p\, dx \,  ,
$$ 
by the concavity of the function $t\mapsto t^{p/p^{\ast}}$ we conclude that either $u_1$ or $u_2$ vanishes. Hence we have that the support of $u$ is indecomposable.  Once we know this we can apply the classical interior regularity result in \cite{Caf3} for solutions of the Monge-Ampère equation to conclude that $\varphi\in W^{2,\alpha}_{loc}$. This implies that $\nabla\mathcal{T}$ has no singular part,\footnote{A different and more direct proof of this fact can be found in \cite[Step 2 in the proof of Proposition 6]{CNV}} hence the equality in the arithmetic-geometric inequality \eqref{AMGM} implies that the matrix $\nabla\mathcal{T}$ is a multiple of the identity which implies that $\mathcal{T}(x)=\lambda(x-x_0)$,  for some $\lambda>0$ and $x_0\in\overline{\Sigma}$ (see \cite[Step 3 in the proof of Proposition 6]{CNV}). Of course, from this fact the result follows easily, as already discussed.  Finally, properties $(i)-(ii)-(iii)$ on the location of $x_0$ follow for instance from the fact that $\mathcal{T}$ has to map $\Sigma$ onto $\Sigma$.

%
%

\end{proof}

\subsection{Weighted Sobolev inequalities:} actually the optimal transport approach is also suitable to prove a more general class of sharp weighted Sobolev inequalities.  Given $1<p<n$, a convex cone $\Sigma$ of $\mathbb{R}^n$ and given a weight $w\in C^0(\mathbb{R}^n)$ such that it is positive, homogeneous of degree $\alpha\geq 0$ and $w^{\frac{1}{\alpha}}$ is concave if $\alpha>0$, there exists a positive constant $S=S(n,p,a,w,H)$ such that
$$
 \left(\int_{\Sigma}H(\nabla u(x))^p w(x)\, dx\right)^{\frac{1}{p}}\geq S \left(\int_{\Sigma}\vert u(x)\vert^{\beta} w(x)\, dx\right)^{\frac{1}{\beta}}\, ,  \quad \text{ for all $u\in \dot{W}^{1,p}(\Sigma)$}\, ,
$$
where 
$$
\beta:=\frac{p(n+a)}{n+a-p}\, . 
$$
This results can be found in \cite{CROS,CFR,BGK} where the case of (more general) sharp weighted Sobolev inequalities in convex cones is treated.

\section{Critical points}\label{FR}

Besides the study of the extremals of the Sobolev inequality in $\mathbb{R}^n$ one interesting and challenging aspect is the study of critical points of the Sobolev inequality.  For simplicity and for later convenience we define the following 2-parameters subclass of extremals: 
\begin{equation}\label{Talentiane}
\mathcal{U}_{\lambda,x_0}(x):=\dfrac{\left[n\left(\frac{n-p}{p-1}\right)^{p-1}\lambda^p\right]^{\frac{n-p}{p^2}}}{\left(1+ \lambda^{\frac{p}{p-1}}\vert x-x_0\vert^{\frac{p}{p-1}} \right)^{\frac{n-p}{p}}}\, , 
\end{equation}
where $\lambda>0$ and $x_0\in\mathbb{R}^n$. We will refer to these kind of functions as \emph{Aubin-Talenti bubbles}. Roughly speaking the idea is the following: let $u$ be an Aubin-Talenti bubble and we compute the first variation of the Sobolev inequality (which is zero) in order to find the associated Euler-Lagrange equation. Explicitly, let 
$$
u(x)=\mathcal{U}_{\lambda,x_0}(x)\, , 
$$
and compute 
$$
\dfrac{d}{d\varepsilon}\left.\left(\Vert\nabla u+\varepsilon\nabla\varphi \Vert_{L^p(\mathbb{R}^n)}-S\Vert u+\varepsilon\varphi \Vert_{L^{p^\ast}(\mathbb{R}^n)}\right)\right|_{\varepsilon=0}=0\,  ,  \quad \text{for all $\varphi\in C^{\infty}_c(\mathbb{R}^n)$}\, .
$$
A direct computation yields to 
$$
\int_{\mathbb{R}^n}|\nabla u|^{p-2}\nabla u\cdot\nabla\varphi\, dx-\int_{\mathbb{R}^n}u^{p^\ast-1}\varphi\, dx=0 ,  ,  \quad \text{for all $\varphi\in C^{\infty}_c(\mathbb{R}^n)$}\, ,
$$
which is the weak formulation of the following quasilinear PDE
\begin{equation}\label{p-Laplace}
\Delta_p u+ u^{p^{\ast-1}}=0 \quad \text{in $\mathbb{R}^{n}$}\, ,
\end{equation}
which is called the \emph{critical $p$-Laplace equation} and where $\Delta_p u$ is the usual \emph{$p$-Laplace operator} defined in the following way
$$
\Delta_p u:=\mathrm{div}(\vert\nabla u\vert^{p-2}\nabla u)\, .
$$
Note that with the definition of Aubin-Talenti bubbles \eqref{Talentiane} we find that every Aubin-Talenti bubble solves \eqref{p-Laplace} exactly. Summing up,  we have shown that the Aubin-Talenti bubbles are solutions of the following quasilinear problem: 
\begin{equation}\label{EL}
		\begin{cases}
		\Delta_p u + u^{p^\ast-1} =0 & \text{ in } \mathbb{R}^n \\
		u>0  \,.
		\end{cases}
\end{equation}
The natual question now is: 
\begin{center}
are the Aubin-Talenti bubbles \eqref{Talentiane} the only solutions to \eqref{EL}? 
\end{center}
This question has attracted a lot of interest both in the PDE's and in the differential geometry communities. Indeed it is well-know that the critical Laplace equation (so $p=2$) is related to the \emph{Yamabe problem}.  Thanks to the efforts made in \cite{Yamabe, Trudinger,Obata,Aubin_bis,Schoen} (see also the survey \cite{LP}) we know the validity of the following theorem.

\begin{theorem}
Let $(M, g_0)$ be a compact Riemannian manifold of dimension $n \geq 3$. Then there exists a metric $g$ on $M$ which is conformal to $g_0$ and has constant scalar curvature.
\end{theorem}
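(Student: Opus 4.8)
The plan is to recast the problem as a critical semilinear elliptic equation and to solve it variationally, the decisive difficulty being precisely the loss of compactness in the critical Sobolev embedding — the phenomenon governed by the Aubin--Talenti bubbles discussed above.

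\emph{Step 1: conformal reduction.} Writing $g=u^{4/(n-2)}g_0$ with $u>0$ smooth, the scalar curvatures are linked by the conformal Laplacian $L_{g_0}:=-\frac{4(n-1)}{n-2}\Delta_{g_0}+R_{g_0}$ via $R_g=u^{-(n+2)/(n-2)}L_{g_0}u$. Hence $g$ has constant scalar curvature if and only if $u$ solves the Yamabe equation $L_{g_0}u=c\,u^{(n+2)/(n-2)}$ for some constant $c$, which is the Euler--Lagrange equation of the Yamabe quotient
$$
Q_{g_0}(u)=\frac{\int_M u\,L_{g_0}u\,dV_{g_0}}{\left(\int_M |u|^{2^\ast}\,dV_{g_0}\right)^{2/2^\ast}}\,,\qquad 2^\ast=\frac{2n}{n-2}\,.
$$
So it suffices to show that the Yamabe invariant $Y(M,[g_0]):=\inf\{Q_{g_0}(u):0<u\in C^\infty(M)\}$ is attained by some positive function (after which elliptic regularity makes the minimizer smooth and $g=u^{4/(n-2)}g_0$ is the desired metric).

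\emph{Step 2: subcritical approximation and the compactness dichotomy.} Since $2^\ast$ is critical, minimizing sequences for $Q_{g_0}$ need not converge. Following the scheme of Yamabe and Trudinger, one first minimizes the subcritical quotient obtained by replacing $2^\ast$ with $s\in(2,2^\ast)$: the compact embedding $H^1(M)\hookrightarrow\hookrightarrow L^s(M)$ plus elliptic regularity produces a positive smooth minimizer $u_s$, normalized by $\|u_s\|_{L^s(M)}=1$, solving $L_{g_0}u_s=\mu_s\,u_s^{s-1}$ with $\mu_s\to Y(M,[g_0])$ as $s\uparrow 2^\ast$. One then studies the limit $s\to 2^\ast$ (when $Y(M,[g_0])\le 0$ no concentration can occur and direct minimization already settles the matter, so one may assume $Y(M,[g_0])>0$): either $\{u_s\}$ stays uniformly bounded, in which case elliptic estimates yield a positive solution of the Yamabe equation and we are done; or $\{u_s\}$ concentrates at a point $p\in M$. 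A blow-up analysis around $p$ shows that, after rescaling, $u_s$ converges to an extremal of the Sobolev inequality on $\R^n$ — the case $\Sigma=\R^n$, $p=2$ of Theorem \ref{Sob_sharp} — and this forces $Y(M,[g_0])\ge Y(\mathbb{S}^n)$, where $Y(\mathbb{S}^n)$ is the Yamabe invariant of the round sphere, computable from the optimal Sobolev constant \eqref{opt_constant}. Equivalently, by concentration--compactness the infimum is attained whenever the strict inequality $Y(M,[g_0])<Y(\mathbb{S}^n)$ holds.

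\emph{Step 3: the strict inequality.} It remains to prove $Y(M,[g_0])<Y(\mathbb{S}^n)$ unless $(M,g_0)$ is conformally the round sphere (in that exceptional case the round metric is an explicit constant-curvature representative and there is nothing to do). This is the heart of the proof, achieved by constructing sharp test functions. For $n\ge 6$ with $(M,g_0)$ not locally conformally flat, Aubin's test function is a truncated Aubin--Talenti bubble transplanted near a point where the Weyl tensor $W_{g_0}$ does not vanish; expanding $Q_{g_0}$ on it gives $Y(\mathbb{S}^n)$ minus a positive multiple of $|W_{g_0}|^2$. For the residual cases ($n=3,4,5$, or $(M,g_0)$ locally conformally flat but not conformal to $\mathbb{S}^n$), Schoen's test function is built from the Green's function $G_p$ of $L_{g_0}$ with pole at $p$; the expansion of $Q_{g_0}$ brings in the ADM mass of the asymptotically flat manifold $(M\setminus\{p\},G_p^{4/(n-2)}g_0)$, and the positive mass theorem guarantees that this mass is strictly positive unless the manifold is conformal to $\mathbb{S}^n$. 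I expect this last step — in particular the reliance on the positive mass theorem and the delicate Green's function expansion — to be the main obstacle; by contrast Steps 1 and 2 are by now standard elliptic theory and calculus of variations.
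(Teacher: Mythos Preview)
The paper does not prove this theorem: it is quoted as background, attributed to \cite{Yamabe,Trudinger,Obata,Aubin_bis,Schoen} with a reference to the survey \cite{LP}, and no argument is given beyond the conformal reformulation that follows the statement. Your outline is precisely the classical Yamabe--Trudinger--Aubin--Schoen strategy from those references (conformal reduction to $L_{g_0}u=c\,u^{(n+2)/(n-2)}$, subcritical approximation, Aubin's strict inequality $Y(M,[g_0])<Y(\mathbb{S}^n)$ via local test functions in the high-dimensional non-locally-conformally-flat case, and Schoen's positive mass argument in the remaining cases), so it is correct and consistent with what the paper cites, though there is no in-paper proof to compare it against.
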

If we write the conformal change from $g_0$ to $g$ in the following way
$$
g=u^{\frac{4}{n-2}}g_0
$$ 
for some positive and smooth function $u:M\rightarrow\mathbb{R}$. Then finding $g$ is equivalent to ask that $u$ solves the following PDE
$$
\frac{4(n-1)}{n-2}\Delta_{g_0}u-R_{g_0}u+R_{g}u^{\frac{n+2}{n-2}}=0\, ,
$$
where $R_{g_0}$ and $R_{g}$ denotes the scalar curvature of $M$ with respect to $g_0$ and $g$, respectively.  When $(M,g_0)$ is the round sphere, by stereographic projection we get that the previous PDE becomes
$$
\frac{4(n-1)}{n-2}\Delta u+R_{g}u^{\frac{n+2}{n-2}}=0 \quad \text{ in } \quad \mathbb{R}^n\, ,
$$
and hence (up to constants)
$$
\Delta u+u^{\frac{n+2}{n-2}}=0 \quad \text{in $\mathbb{R}^n$,}
$$
which is the critical Laplace equation.

Turning back to question of characterization of solutions to \eqref{EL} the state of the art is presented in the following theorem.

\begin{theorem}\label{Liouville}
Let $1<p<n$ and let $u$ be a solution to \eqref{EL} such that $u\in\dot{W}^{1,p}(\mathbb{R}^n)$. Then $u=\mathcal{U}_{\lambda,x_0}$, for some $ \lambda >0 $ and $ x_0 \in \mathbb{R}^n$.
\end{theorem}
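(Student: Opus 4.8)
The plan is to use an integral-estimate strategy, in the spirit of the approach of Catino--Monticelli--Roncoroni: first establish the regularity of $u$ together with its sharp decay at infinity, then prove an integral inequality, obtained by testing \eqref{EL} against a carefully weighted vector field and integrating by parts, whose only equality case forces a trace-free second-order tensor attached to $u$ to vanish identically; this rigidity pins $u$ down to be radially symmetric about a point, and the resulting ODE is then solved explicitly and produces exactly the Aubin--Talenti bubbles. (For $p=2$ this is the classical argument of Caffarelli--Gidas--Spruck via an Obata-type integration by parts; the first proofs for general $p$, due to V\'etois and Sciunzi, instead ran the method of moving planes once the asymptotics were known.) To begin, I would record that $u>0$ (which is part of the hypothesis) and that, by the regularity results of DiBenedetto and Tolksdorf, $u\in C^{1,\alpha}_{loc}(\mathbb{R}^n)$ while $u$ is smooth, indeed real-analytic, on the open set $\Omega:=\{\nabla u\neq 0\}$ where \eqref{EL} is uniformly elliptic. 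Exploiting $u\in\dot W^{1,p}(\mathbb{R}^n)$, a Moser--Serrin iteration yields the sharp asymptotics
$$
u(x)=O\!\left(|x|^{-\frac{n-p}{p-1}}\right),\qquad |\nabla u(x)|=O\!\left(|x|^{-\frac{n-p}{p-1}-1}\right)\qquad\text{as }|x|\to\infty,
$$
matching the decay of the bubbles; these rates are exactly what makes all the boundary terms at infinity in the next step disappear.

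The heart of the argument is the integral identity. Passing to a suitable power $w$ of $u$ --- for $p=2$ one may take $w=u^{-2/(n-2)}$, so that the bubbles become, up to scaling, $w=1+|x-x_0|^2$ --- I would compute the equation satisfied by $w$ and a Bochner-type identity for it, multiply by a weight of the form $u^{a}|\nabla u|^{b}$ with exponents $a,b$ tuned to the problem, and integrate over $\mathbb{R}^n$. Using the decay above and a measure/capacity estimate guaranteeing that the critical set $\mathbb{R}^n\setminus\Omega$ gives no contribution, one is led to an inequality
$$
\int_{\mathbb{R}^n}u^{a}\,|\nabla u|^{b}\,\bigl|\mathring A\bigr|^{2}\,dx\;\le\;0,
$$
where $\mathring A$ is the trace-free part of the natural weighted second-order tensor built from $w$ (for $p=2$, $\mathring A=\Hess w-\tfrac{\Delta w}{n}\,\mathrm{Id}$). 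Since the integrand is nonnegative, $\mathring A\equiv 0$ on $\Omega$.

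The rigidity $\mathring A\equiv 0$ is overdetermined: it forces the level sets of $u$ to be round spheres, and together with $u>0$ and the decay it shows that $u$ is radially symmetric and strictly decreasing about a single point $x_0$, with exactly one critical point. Inserting the radial ansatz into \eqref{EL} reduces it to the ODE $(r^{n-1}|u'|^{p-2}u')'+r^{n-1}u^{p^\ast-1}=0$; integrating this with the correct behaviour at $r=0$ and the prescribed decay at $r=\infty$ produces precisely the one-parameter family $u=\mathcal U_{\lambda,x_0}$, which is the conclusion of the theorem.

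The main obstacle is the degeneracy of $\Delta_p$ at the critical points of $u$: there $u$ is only $C^{1,\alpha}$, not $C^2$, so one cannot differentiate \eqref{EL} twice globally, and the whole second step has to be set up on $\Omega$ with quantitative control of every quantity near $\partial\Omega=\{\nabla u=0\}$ --- in particular one must show that this set is negligible for the relevant weighted measures, so that no spurious boundary terms survive. Establishing the sharp asymptotics at infinity for the full range $1<p<n$ is the other technically heavy ingredient.
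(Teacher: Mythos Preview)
The paper is a survey and does not give its own proof of Theorem~\ref{Liouville}; it only describes the state of the art. What it records is that the original proofs (Caffarelli--Gidas--Spruck for $p=2$, then Damascelli--Merch\'an--Montoro--Sciunzi, V\'etois, and Sciunzi for general $1<p<n$) go via the \emph{method of moving planes} combined with sharp asymptotics at infinity, while the Kelvin transform is available only when $p=2$. The paper then points out that a different route, based on \emph{integral identities} in the spirit of Serrin--Zou, was developed in \cite{CFR} precisely because moving planes fails in the anisotropic and conical settings of Theorem~\ref{thm}.

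Your proposal follows this second route --- the integral-identity / trace-free tensor approach --- rather than the moving-planes proofs that the paper attributes to Theorem~\ref{Liouville} itself. That is a perfectly legitimate alternative, and in fact closer to the strategy the paper advertises for the more general Theorem~\ref{thm}. The trade-off is exactly the one you identify: moving planes gives radial symmetry rather directly once the decay is known, but is unavailable for anisotropic norms or cones; the integral-identity method is more robust in those directions but requires the delicate handling of the degenerate set $\{\nabla u=0\}$ and the choice of the right weighted tensor and test function, which you correctly flag as the main technical obstacles. Your sketch is structurally sound; the heavy lifting you would still owe is the precise computation producing the nonnegative integrand (the exponents $a,b$ and the exact form of $\mathring A$ for general $p$), and the justification that the critical set contributes nothing --- both are nontrivial but available in the literature you cite.
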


In the semilinear case $p=2$ the theorem, without the assumption $u\in\dot{W}^{1,2}(\mathbb{R}^n)$, has been proved in the celebrated paper \cite{CGS} (see also \cite{CL} and \cite{Obata,GNN} for previous important results) by using the \emph{method of moving planes} (see the reviews \cite{Brezis} and \cite{CRBologna}) and the \emph{Kelvin transform}.  In the quasilinear case $p\neq 2$ the problem is more difficult because of the nonlinear structure of the $p$-Laplace operator and because of the lack of regularity of the solutions. Moreover, in the quaslilinear case the Kelvin transform is not available.  The first result related to the quasilinear case has been obtained in \cite{DMMS} for $\frac{2n}{n+2}\leq p<2$; the result has been extended to the case $1 <p <2$ in \cite{Vetois} and to the case $2<p<n$ in \cite{Sciunzi} exploiting a fine analysis of the behaviour of the solutions at infinity that allows to exploit the moving plane method as developed in \cite{DPR} and \cite{DR} (see also \cite{SZ}). We point out that the big difference between the semilinear and the quasilinear case is the additional assumption, in the quasilinear case, that $u$ has finite energy, i.e.  $u\in\dot{W}^{1,p}(\mathbb{R}^n)$: to prove the analogue result, removing this assumption, is an open and challenging problem! 

Finally, we mention that even in the semilinear case the assumption $u>0$ is fundamental, indeed it is possible to construct many sign-changing solutions to 
$$
\Delta u+ u\vert u\vert^{2^\ast-2}=0 \quad \text{in $\mathbb{R}^n$}\, ,
$$
which are not radial (see \cite{Ding} and also \cite{DMPP1,DMPP2,MW,MMW,MM}).  Moreover, we refer to positive solutions of \eqref{p-Laplace} since, from the maximum principle for quasilinear equations (see e.g. \cite{Vaz}), non-negative solutions to \eqref{p-Laplace} are either zero or positive.

The same procedure of computing the first variation of the Sobolev inequality can be done also starting from the anisotropic Sobolev inequality in convex cones \eqref{Sobolev_cone}. In analogy to \eqref{Talentiane} we consider the following subclass of extremals:
\begin{equation}\label{Talentiane_anis}
\mathcal{U}^{H}_{\lambda,x_0}(x):=
\frac{\left[n\left(\frac{n-p}{p-1}\right)^{p-1}\lambda^p\right]^{\frac{n-p}{p^2}}}{\left( 1 + \lambda^\frac{p}{p-1} H_0(x_0-x)^\frac{p}{p-1} \right)^{\frac{n-p}{p}}} \, , 
\end{equation}
where $\lambda>0$ and $x_0\in\overline{\Sigma}$ is such that if $\Sigma=\mathbb{R}^n$ then $x_0$ may be any point of $\mathbb{R}^n$, if $\Sigma=\mathcal{C}\times\mathbb{R}^k$ with $k\in \lbrace\, \dots,n-1\rbrace$ and $\mathcal{C}$ does not contain a line, then $x_0\in \mathbb{R}^k \times \lbrace \mathcal{O}\rbrace$, otherwise $x_0 = \mathcal{O}$.  We will refer to these kind of functions as anisotropic Aubin-Talenti bubbles.

As before, let
$$
u(x)=\mathcal{U}^{H}_{\lambda,x_0}(x)
$$
and computing as before the first variation of the anisotropic Sobolev inequality in convex cones we obtain 
$$
\int_{\Sigma} H^{p-1}(\nabla u)\nabla H(\nabla u)\cdot\nabla\varphi\, dx=\int_{\Sigma} u^{p^\ast-1}\varphi\, dx  ,  ,  \quad \text{for all $\varphi\in C^{\infty}(\Sigma)$}\, ,
$$
i.e. the weak formulation of the following Neumann quaslilinear problem: 
\begin{equation}\label{EL_bis}
		\begin{cases}
		\Delta^H_p u + u^{p^\ast-1} =0 & \text{ in } \Sigma \\
		a(\nabla u)\cdot\nu=0 & \text{ on } \partial\Sigma \,.
		\end{cases}
\end{equation}
where $\nu$ is the outward normal to $\partial\Sigma$,
$$
a(\nabla u):=H^{p-1}(\nabla u)\nabla H(\nabla u)
$$
and $\Delta^H_p u$ is the so-called \emph{anisotropic (or Finsler) $p$-Laplace operator} defined in the following way
$$
\Delta^H_p u:=\mathrm{div}(a(\nabla u))\, . 
$$
In \cite{CFR} we prove the following result which is the generalization of Theorem \ref{Liouville} for positive solutions to the problem \eqref{EL_bis}.

\begin{theorem}\label{thm}
Let $1<p<n$ and let $\Sigma=\mathbb{R}^k\times\mathcal{C}$ be a convex cone of $\mathbb{R}^n$, where $\mathcal C$ does not contain a line. Let $H$ be a norm of $\mathbb R^n$ such that $H^2$ is of class $C^{2}(\R^n\setminus \{\mathcal O\})\cap C^{1,1}(\mathbb{R}^n)$ and it is uniformly convex\footnote{i.e. there exist two constants $0<\lambda \leq \Lambda$ such that
\begin{equation*}
\lambda {\rm Id}\leq H(\xi)\,D^2H(\xi)+\nabla H(\xi)\otimes \nabla H(\xi) \leq \Lambda\,{\rm Id}\qquad \forall\,\xi \in \R^n\setminus \{\mathcal O\}
\end{equation*}
(note that $D^2(H^2)=2H\,D^2H+2\nabla H\otimes \nabla H$).}.
Let $u$ be a positive solution to \eqref{EL_bis} such that $u\in\dot{W}^{1,p}(\Sigma)$. Then $u(x)=\mathcal U_{\lambda,x_0}^H (x)$ for some $ \lambda >0 $ and $ x_0 \in \overline{\Sigma}$. Moreover, 
\begin{itemize}
	\item[$(i)$] if $k=n$ then $\Sigma = \mathbb{R}^n$ and $x_0$ may be a generic point in $\mathbb{R}^n$;
	\item[$(ii)$] if $k\in\{1,\dots,n-1\}$ then $x_0\in\mathbb{R}^k\times\mathcal{\{\mathcal O\}}$;
	\item[$(iii)$] if $k=0$ then $x_0=\mathcal{O}$.
\end{itemize} 
\end{theorem}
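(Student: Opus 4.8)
The plan is to classify the positive finite-energy solutions of \eqref{EL_bis} by a direct \emph{rigidity} argument for the equation itself. A general convex cone and a general anisotropic norm admit no reflection symmetry, so the moving plane method used for Theorem \ref{Liouville} is unavailable; instead the idea is to exploit the same geometric mechanism that makes the optimal transport proof of Theorem \ref{Sob_sharp} rigid — forcing a suitable (anisotropic) Hessian to be, pointwise, a multiple of the identity — but carried out on the PDE side. The argument rests on three ingredients: (a) sharp regularity and asymptotics for $u$; (b) an integral identity of Pohozaev/$P$-function type associated with \eqref{EL_bis}; (c) the analysis of its equality case, which simultaneously identifies $u$ and pins down the location of $x_0$.

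\smallskip

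\emph{Step 1: regularity and asymptotics.} By the regularity theory for degenerate quasilinear equations, combined with the uniform convexity and $C^{1,1}$-regularity of $H^2$, a positive solution $u\in\dot W^{1,p}(\Sigma)$ of \eqref{EL_bis} is $C^{1,\alpha}_{\mathrm{loc}}$ up to $\partial\Sigma$ and, by Hopf-type principles, strictly positive on $\overline\Sigma\setminus\{\mathcal O\}$. The quantitative core is the behaviour at infinity and at the vertex: using the finite-energy hypothesis together with barrier comparisons built from the anisotropic Aubin--Talenti bubbles \eqref{Talentiane_anis}, one shows that $u$ decays exactly like a bubble, $u(x)\approx\bigl(1+H_0(x)\bigr)^{-\frac{n-p}{p-1}}$ with matching gradient bounds, in the spirit of the Euclidean analysis in \cite{Vetois,Sciunzi}. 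These estimates are what make every integration by parts below legitimate and what kill the boundary contributions at infinity.

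\smallskip

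\emph{Steps 2--3: integral identity and rigidity.} It is convenient to pass to the new unknown $w:=u^{-\frac{p}{n-p}}$; then the anisotropic bubbles $\mathcal U^H_{\lambda,x_0}$ correspond precisely to the affine functions of $H_0(x-x_0)^{p/(p-1)}$, and one uses that $\xi\mapsto H_0(\xi)^{p/(p-1)}$ has \emph{constant} anisotropic $p$-Laplacian $\Delta^H_p$. Testing the equation satisfied by $w$ (a uniformly elliptic equation away from the critical points of $u$, with an inherited Neumann condition on $\partial\Sigma$) against a suitable multiple of $w$, integrating over $\Sigma\cap B_R$ and letting $R\to\infty$, produces an identity of the shape
\[
\int_{\Sigma}\bigl(\text{nonnegative, anisotropically weighted quadratic form in the trace-free part of }D^2w\bigr)\,dx
\;+\;\int_{\partial\Sigma}\bigl(\text{boundary term}\bigr)\,d\mathcal H^{n-1}\;=\;0 .
\]
The boundary integrand is nonnegative because the Neumann condition of \eqref{EL_bis} together with the convexity of $\Sigma$ makes the second fundamental form of $\partial\Sigma$ enter with the favourable sign — exactly the role played by inequality \eqref{normal} in the proof of Lemma \ref{key lemma} — while the bulk integrand is nonnegative by an arithmetic--geometric-type inequality, which is the PDE counterpart of the equality analysis of \eqref{AMGM}. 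Hence both integrands vanish identically: the bulk term forces the anisotropic Hessian of $w$ to be a multiple of the identity everywhere, so with Step 1 one gets $w(x)=A+B\,H_0(x-x_0)^{p/(p-1)}$ with $A,B>0$, i.e. $u=\mathcal U^H_{\lambda,x_0}$; and the boundary term forces $x_0\in\mathbb R^k\times\{\mathcal O\}$ (which is all of $\mathbb R^n$ when $k=n$) if $k\ge1$, and $x_0=\mathcal O$ if $k=0$ — exactly the dichotomy $(i)$--$(iii)$.

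\smallskip

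The main obstacle is Step 3: finding the right $P$-function/test function so that both the bulk and the boundary integrands come out with a definite sign, and then making the computation rigorous in spite of the degeneracy of $\Delta^H_p$ at the critical points of $u$ (where $w$ attains its minimum and ellipticity is lost), which calls for a delicate truncation/approximation near those points. A second substantial difficulty is the asymptotic analysis of Step 1 in the conical and anisotropic framework, where the Euclidean results cannot simply be quoted and where controlling $u$ near the vertex $\mathcal O$, and excluding anomalous behaviour there, genuinely relies on the finite-energy hypothesis $u\in\dot W^{1,p}(\Sigma)$.
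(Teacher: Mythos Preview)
Your proposal is correct and follows essentially the same strategy that the paper outlines (and attributes to \cite{CFR}): an integral-identity/$P$-function argument---carried out via the substitution $w=u^{-p/(n-p)}$, which linearises the anisotropic bubbles---producing a nonnegative trace-free Hessian term in the bulk and a boundary term with the right sign by the convexity of $\Sigma$, preceded by the sharp asymptotic/regularity estimates in the spirit of \cite{Vetois,Sciunzi,SZ_bis}. Since the survey itself does not present a detailed proof but only indicates this route, your outline is in fact more explicit than what appears in the paper.
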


As already mentioned, case $(i)$ in Theorem \ref{thm} has been already proved in \cite{CGS,DMMS,Vetois,Sciunzi} when $\Sigma=\mathbb{R}^n$ and $H$ is the Euclidean norm.  Actually in the Euclidean case and when $p=2$, the classification result in convex cones was proved in \cite[Theorem 2.4]{LPT} by using the Kelvin transform (inspired by \cite{Gidas} and \cite{Obata}).

For general $1<p<n$, the Kelvin transform and the method of moving planes are not helpful neither for anisotropic problems nor inside cones. In \cite{CFR}, we provide a new approach to the characterization of solutions to critical $p-$Laplace equation, which is based on integral identities rather than the method of moving planes. This approach takes inspiration from \cite{SZ_bis}, where the authors prove nonexistence results generalizing the ones in \cite{Gidas} to $1<p<n$ and takes inspiration also from \cite{CS,BC,BCS,BNST,CR} where the authors prove symmetry and rigidity results for overdetermined problem (see \cite{Serrin, Weinberger,Reichel,PT}) in the anisotropic and in the conical settings.

Finally, we mention that also in this case the assumption $u>0$ is fundamental, indeed it is possible to construct sign-changing solutions to 
$$
		\begin{cases}
		\Delta u + \vert u\vert^{2^\ast-2}u =0 & \text{ in } \Sigma \\
		\partial_\nu u=0 & \text{ on } \partial\Sigma \,.
		\end{cases}
$$
which are non-radial (see \cite{CP}).

\section{Quantitative studies}\label{quant}

In this section we present some results for two important and fascinating problems related to the sharp Sobolev inequality in $\mathbb{R}^n$: the study of the stability of the extremals and of the critical points.

\subsection{Almost extremals.}
In this subsection we investigate another important aspect related to the sharp Sobolev inequality in $\mathbb{R}^n$: the stability of the Sobolev inequality \eqref{Sobolev} or the quantitative version of the Sobolev inequality \eqref{Sobolev}.  

Firstly, we indicate with $\mathcal{M}_p$ the $(n+2)-$dimensional manifold of all functions of the form \eqref{bubbles}, i.e. 
$$
\mathcal{M}_p:=\left\lbrace \mathcal{U}_{a,\lambda,x_0}(x):=\dfrac{a}{\left(1+ \lambda^{\frac{p}{p-1}}\vert x-x_0\vert^{\frac{p}{p-1}} \right)^{\frac{n-p}{p}}}  \, : \,  a\in\mathbb{R}\, , \lambda>0\,  , x_0\in\mathbb{R}^n \right\rbrace \, . 
$$
Secondly, we define the Sobolev deficit:
$$
\delta_p(u):=\frac{\Vert\nabla u \Vert_{L^p(\mathbb{R}^n)}}{\Vert u \Vert_{L^{p^\ast}(\mathbb{R}^n)}}-S\, ,  \quad \text{for all $u\in\dot{W}^{1,p}(\mathbb{R}^n)$}\, ;
$$
observe that, thanks to the sharp Sobolev inequality: 
\begin{center}
$\delta(u)\geq 0$ for all $u\in\dot{W}^{1,p}(\mathbb{R}^n)$ and $\delta(u)=0$ if and only if $u\in\mathcal{M}_p$.
\end{center}

The idea, based on a question in \cite{BL}, is the following: 
\begin{center}
for $p=2$ the Sobolev deficit can be estimated from below by some appropriate distance between $u$ and $\mathcal{M}_2$?
\end{center}

This problem has been solved in \cite{BE}, by showing that there exists a positive constant $c=c(n)$ such that 
$$
\delta_2(u)\geq c\inf_{v\in\mathcal{M}_2}\left(\frac{\Vert\nabla u-\nabla v\Vert_{L^2(\mathbb{R}^n)}} {\Vert\nabla u\Vert_{L^2(\mathbb{R}^n)}}\right)^{2} \, ,  \quad \text{for all $u\in\dot{W}^{1,2}(\mathbb{R}^n)$}\, ;
$$
and the result is optimal (in terms of the distance and in terms of the exponent $2$).

Now, the natural question is: 
\begin{center}
what about the general case, i.e. $1<p<n$?
\end{center}
The complete answer to this question has been recently provided in \cite{FZ} (see also \cite{CFMP,FN,N} for previous results) where the authors prove the following quantitative estimate: for $1<p<n$ there exists a positive constant $c=c(n,p)$ such that
$$
\delta_p(u)\geq c\inf_{v\in\mathcal{M}_p}\left(\frac{\Vert\nabla u-\nabla v\Vert_{L^p(\mathbb{R}^n)}} {\Vert\nabla u\Vert_{L^p(\mathbb{R}^n)}}\right)^{\alpha} \, ,  \quad \text{for all $u\in\dot{W}^{1,p}(\mathbb{R}^n)$}\, ,
$$
where the exponent $\alpha$ is given by $\max\lbrace 2,p\rbrace$ and it is optimal.
\subsection{Almost critical points.}
In this subsection we consider the Euler-Lagrange equation associated to the Sobolev inequality for $p=2$, i.e.  positive solutions to
\begin{equation}\label{2-Laplace}
\Delta u +  u^{2^\ast-1}=0 \quad \text{in $\mathbb{R}^n$}\, .
\end{equation}
In this subsection we want to investigate the following naïf question:
\begin{center}
if $u$ almost solves \eqref{2-Laplace}, then is it close to an Aubin-Talenti bubble?
\end{center}
In order to answer to this question we define the following deficit:
$$
\delta(u):=\Vert \Delta u + u^{2^\ast-1}\Vert_{H^{-1}}\, ,  \quad \text{for all $u\in\dot{W}^{1,2}(\mathbb{R}^n)$ and $u>0$}\, ;
$$
then it is clear, from Theorem \ref{Liouville}, that
\begin{center}
$\delta(u)=0$ if and only if $u$ is an Aubin-Talenti bubble \eqref{Talentiane}. 
\end{center}
Now, the question becomes the following: 
\begin{center}
if $\delta(u)$ is small, is the $u$ close to an Aubin-Talenti bubble?
\end{center}
The answer to this question is negative as one can see from the following example:
we set 
$$
u(x):=\mathcal{U}_{1,-Re_1}(x) + \mathcal{U}_{1,Re_1}(x)\, , \quad \text{for $R\gg 1$}\, . 
$$
In this case we say that $u$ is the sum of two weakly-interacting Aubin-Talenti bubbles. Then it is intuitive that $u$ will approximately solve  \eqref{2-Laplace} in any reasonable sense. But, of course, $u$ is not close to a single Aubin-Talenti bubble (in particular $\delta(u)\rightarrow 0$, as $R\rightarrow\infty$). Actually, this is the only possibility as shown in \cite{Struwe}, indeed the author shows the following
$$
\Gamma(u)\rightarrow 0\, ,  \quad \text{as $\delta(u)\rightarrow 0$}\, ,
$$
provided $u\in\dot{W}^{1,2}(\mathbb{R}^n)$ is such that
$$
\left(\nu-\frac{1}{2} \right)S^n\leq \int_{\mathbb{R}^n}\vert\nabla u\vert^2\, dx\leq \left(\nu-\frac{1}{2} \right)S^n \, , 
$$
and where  
$$
\Gamma(u):=\inf_{\lambda_i,x_i}||\nabla u-\sum_{i=1}^\nu\nabla\mathcal{U}_{\lambda_i,x_i}||_{L^2(\mathbb{R}^n)}
$$
denote the distance between $u$ and the sum of $\nu(\geq 1)$ Aubin-Talenti bubbles.

The quantitative version of the result in \cite{Struwe} has been the object of several studies, in particular we have the following estimates, according to the number of bubbles $\nu$ and to the dimension: 
$$
\Gamma(u) \lesssim \begin{cases}
	\delta(u) & \text{if } \nu=1, n\geq 3,   \\
	\delta(u) & \text{if } \nu>1,  3\leq n\leq 5, \\
	 \delta(u)\sqrt{\vert\log\delta(u)\vert}& \text{if } \nu>1,  n=6, \\ 
	\delta(u)^{\frac{n+2}{2(n-2)}} & \text{if } \nu>1, n\geq 7;
	\end{cases}  
$$
The first estimate can be found in \cite{CFM}, the second one in \cite{FG} and the third and the the last one in \cite{DSW} and we refer to the original papers for the complete statements and for comments.

\subsection{Further quantitative studies} Finally, we mention that, motivated by important applications in the calculus od variations and evolution PDEs,  the study of the quantitative stability of functional and geometric inequalities has been a growing interest in the recent years.  We refer to the survey papers \cite{Figalli,FigalliJerison,Fusco} for a general discussion and presentation of the results and we refer to \cite{FuMaPr,FMP,CiLe,CiSpa,FiZa,FiMa1,FiMa2,FiMaMo,FI} for the study of the stability for isoperimetric inequalities and to \cite{CiraoloMaggi, CV1,CV2,CVR,MP1,MP2,MP3,DMMN,KM} (see also the survey \cite{CiraoloUMI}) for the study of the stability of constant mean curvature hypersurfaces (i.e. the critical points of the classical isoperimetric inequality \eqref{iso} and these results are motivated by the celebrated Alexandrov soap bubbles theorem in \cite{Alex1} and \cite{Alex2}); moreover we refer to \cite{FiJe,FiMaPrBM,HiS,HST} for the study of the stability of the Brunn-Minkowski inequality, to \cite{CaFi,DoTo,Ng,Ruf} for the stability of the Gagliardo-Nirenberg inequality and to \cite{BWW,ChenFrank,Cianchi,DoZH,FMPlogSob,FMPJFA,GazzWeth,Carlen} (besides the already cited papers) for further stability results related to the Sobolev inequality (in the fractional case or for $p=1$).

%

\medskip

\subsection*{Acknowledgements} 
This survey is based on a short online talk that the author presented during the workshop ``Geometric theory of PDE's and sharp functional inequalities'' organized by Carlo Sbordone and Cristina Trombetti. The author has been the holder of a postdoc funded by the ``Istituto Nazionale di Alta Matematica" (INdAM). The author thanks Giulio Ciraolo for a careful reading of the manuscript.

\end{document}